\font\tenbf=cmb10
\numberwithin{equation}{section}
\theoremstyle{plain}
\newtheorem{theorem}{Theorem}[section]
\newtheorem{remark}{Remark}
\newtheorem{corollary}[theorem]{Corollary}
\newtheorem{definition}[theorem]{Definition}
\newtheorem{lemma}[theorem]{Lemma}
\newtheorem{proposition}[theorem]{Proposition}
\def\bft{{\bf t}}
\newcommand\ci{\perp\!\!\!\perp}
\def\pr{\mathop{\rm pr}\nolimits}
\def\subdot{{\hbox{\tenbf .}}}
\def\indep{\mathrel{\rlap{$\perp$}\kern1.6pt\mathord{\perp}}}
\def\Real{\mathbb{R}}
\def\binomial#1#2{\left(\begin{array}{c} {#1} \\ {#2} \end{array} \right)}
\def\ascf#1#2{{#1}^{\uparrow #2}}
\def\given{\,|\,}
\def\P{{\cal P}}
\def\T{{\bf T}}
\def\Ttx{\mathop{\rm Ttx}\nolimits}
\def\Tx{\mathop{\rm Tx}\nolimits}
\def\OP{{\cal OP}}
\def\OF{{\cal OF}}
\def\F{{\cal F}}
\def\beginignoretext{\setbox0=\vbox\bgroup}
\def\endignoretext{\egroup}
\def\question{\medbreak\noindent \advance\qno by 1 {\bf Q.~\number\qno} \ }
\begin{document}

\begin{frontmatter}
\title{The pilgrim process}
\runtitle{The pilgrim process}

\begin{aug}
\author{\fnms{Walter} \snm{Dempsey}\ead[label=e1]{wdem@uchicago.edu}}
\and
\author{\fnms{Peter} \snm{McCullagh}\ead[label=e2]{pmcc@galton.uchicago.edu}}
\runauthor{W.Dempsey \& P. McCullagh}

\affiliation{University of Michigan and University of Chicago}

\address{Department of Statistics, University of Michigan, 
		1085 S. University Ave., Ann Arbor, MI 48103 \printead{e1}}
\address{Department of Statistics, University of Chicago, 
		5734 S. University Ave., Chicago, IL 60637\printead{e2}}


\end{aug}

\begin{abstract}
Pilgrim's monopoly is a probabilistic process giving rise to
a non-negative sequence $T_1, T_2,\ldots$ that is infinitely exchangeable,
a natural model for time-to-event data.
The one-dimensional marginal distributions are exponential.
The rules are simple, the process is easy to generate sequentially,
and a simple expression is available for both the joint density
and the multivariate survivor function.
There is a close connection with
the Kaplan-Meier estimator of the survival distribution.
Embedded within the process is an infinitely exchangeable 
ordered partition processes connected to Markov branching processes
in neutral evolutionary theory.
Some aspects of the process, such as the distribution of the number of blocks,
can be investigated analytically and confirmed by simulation.  
By ignoring the order, the embedded process can be considered
as an infinitely exchangeable partition process, shown to 
be closely related to the Chinese restaurant process.   
Further connection to the Indian buffet process is also provided.  
Thus we establish a previously unknown link between
the well-known Kaplan-Meier estimator and the important Ewens sampling formula.
\end{abstract}

\begin{keyword}
\kwd{pilgrim process}
\kwd{infinite exchangeability}
\kwd{Aldous beta-splitting model}
\kwd{Ewens sampling formula}
\end{keyword}

\end{frontmatter}

\section{Introduction}

A fundamental principle underlying statistical modeling is that arbitrary choices
such as sample size or observation labels should not affect the sense of a model
and meaning of parameters.  For example, a priori the labels assigned to 
observations (unit~$1$, $2$, \ldots ) carry no meaning other than to 
distinguish among units.  This guiding precept gives rise to the study of 
exchangeable distributions.
Infinite exchangeability captures the idea that the data generating process
should not be affected by arbitrary relabeling of the sample.  
Here we are concerned with infinitely exchangeable non-negative sequences
$(T_1, T_2, \ldots)$, which arise in 
survival analysis where the observations are event times.
Pilgrim's monopoly is a sequential description of
such a process.  
Restriction to the subsample $[n] = \{ 1, \ldots, n \}$ yields $T[n] = (T_1, \ldots, T_n)$.
For~$T_i$ and $T_j$, the probability of equality~$\pr (T_i = T_j)$ can be 
non-negative, implying the number of unique times in the set~$T[n]$ 
may be less than~$n$.
In this paper, we outline both existing and new theoretical results
concerning the associated asymptotic behavior of the process.  
For example, the number of unique times grows
asymptotically at a rate proportional to $\log^2 (n)$.  
An argument is given as to why the pilgrim's 
monopoly is the most fitting model for applied work 
when such ties may be the result of numerical rounding. 

Research on nonparametric Bayesian survival analysis has been based around 
the de Finnetti approach to constructing exchangeable survival processes
by generating survival times conditionally independent and identically
distributed given a completely independent hazard measure,
i.e.~the cumulative conditional hazard is a L\'evy process
(Cornfield and Detre, 1977; Kalbfleisch, 1978; Hjort, 1990; Clayton, 1991).
Such processes are sometimes called \emph{neutral to the right}
(Doksum, 1974; James, 2006).
Pilgrim's monopoly corresponds to a particular choice of L\'evy process
and thus fits naturally within the existing literature.
However, the probability density can be computed using the characteristic index,
and the process can be simulated directly from the predictive distributions, 
allowing us to bypass the L\'evy process entirely.

While connections to survival analysis were expected, 
we find that the pilgrim process provides a fundamental link 
among seemingly disparate areas of statistics.  
The connections arise due to the embedding of two
processes within the pilgrim's monopoly.
First the ties of $T[n]$ generate
a partial ranking (or ordered partition) of the units.
That is, $T[n]$ generates a sequence of unique times~
$t_1 < t_2 <\ldots< t_k$ for some $k \leq n$.  This induces
a clustering of units by unique times~(i.e.
$B_i = \{ j  \in [n] \given T_j = t_i \}$).
The partial ranking of the blocks is obtained by the ordering
of the associated unique times.    The embedded ordered
partition process is infinitely exchangeable as it only depends
on block order and block size.  

The embedded ordered partitions are characterized by a
splitting rule~$q_n (d) = q(n-d, d)$, which only depends on the total number 
individuals at risk leading up to the next failure time,~$n$, and the number of those who fail at that time,~$d \in \{1,\ldots, n \}$.
An interesting connection to Markov branching
models in neutral evolutionary theory is shown, in particular the family
of beta-splitting models (Aldous, 1996). These contain several
important models, namely the symmetric binary tree model, 
Yule model, and uniform model. 
We explain how the beta-splitting model corresponding to the 
pilgrim process may be most fitting in the study of phylogenetic
trees.

Infinitely exchangeable partition processes are important in cluster analysis,
serving as a natural prior for the partition of sampled units.
More recently there has been interest in a generalization to feature allocation
models (Broderick, Pitman, and Jordan, 2013).  
Each infinitely exchangeable process on non-negative sequences
induces an infinitely exchangeable partition process by the relation
$i \sim j$ if $T_i = T_j$.
We show how this induced partition process contains the well-known 
Chinese restaurant process.
Moreover, a generalization of the pilgrim process to doubly 
indexed sequences $\{ T_{i,j} \} $ is shown to be connected with
the Indian buffet process.

\subsection{Outline}

The goal of this paper is to highlight existing and new results related to the
pilgrim process.  We establish a fundamental connection between this process
and models in seemingly unrelated areas of statistics.  
In particular, we provide a previously unknown connection 
between the Kaplan-Meier estimator 
and the Ewens sampling formula.

We organize our discussion as follows.  In Section~\ref{pilgrim}, we provide 
a description of the pilgrim's monopoly.  In Section~\ref{illustration}, we 
investigate the behavior of the process via simulation, providing some 
intuition to the key features.  In Section~\ref{key_behavior}, we 
summarize the known theoretical behavior of the process 
as well as establish several new results.  In particular
we argue that the pilgrim process is the natural choice for survival analysis
as it is the unique subfamily of processes with weakly continuous predictive
distributions.  In Section~\ref{literature}, we review the connections with 
previous literature, in particular the study of \emph{neutral to the right processes}
used in Bayesian survival analysis and Markov branching models in the study of 
probability distributions on cladograms. In Section~\ref{extensions}, we discuss
extensions of the pilgrim process to a richer set of time-to-event models with various
asymptotic behavior, as well as to the setting of recurrent events.  
In Section~\ref{clustering}, we provide an explicit connection between the 
various processes and the Chinese restaurant process and Indian buffet
process.  The connection yields alternate methods for producing 
exchangeable random partition and feature allocation processes.

\section{Pilgrim's monopoly} \label{pilgrim}
Although not as simple as the Chinese restaurant process (Pitman, pp.~57--62,~2006)
or the Indian buffet process (Griffiths and Ghahramani,~2005),
the rules of pilgrim's monopoly are straightforward and
reminiscent of the board game even though the available real estate is an infinite
straight line rather than a square loop.
Pilgrim's monopoly is a novel description of a process first described by
Dempsey and McCullagh (2015) in the more general study of Markov survival processes.
It is a process involving a sequence of pilgrims or travellers
who pay toll fees and hotel taxes as they go, proceeding until their funds are depleted.
The initial funds $X_1, X_2,\ldots$ for each pilgrim are distributed independently 
according to the exponential distribution with unit mean.
Toll fees are paid continuously at the posted per-mile rate $\tau(s)$,
which is reduced after each passing traveller,
and taxes are levied by each hotel encountered on the route.
If his funds are exhausted, the traveller establishes a new hotel at the point of exhaustion,
and collects taxes from subsequent passers-by.
If he arrives at a hotel where his remaining funds are insufficient
to pay the tax, the funds are forfeit,
and he remains at the hotel as a resident.  

Initially, there are no hotels, the toll rate is uniform $1/\rho$ dollars per mile,
so the first traveller setting out from the origin
proceeds to the point $T_1 = \rho X_1$ where he establishes the first hotel.
At that stage, the toll rate at point $s < T_1$ is reduced to $1/(1+\rho)$,
the rate at $s > T_1$ is unchanged,
and the hotel tax is the log ratio $\log((1+\rho)/\rho)$ of the
upstream toll rate to the downstream rate.
Upstream is the direction of travel.

After $n$ pilgrims have set out from the origin
and reached their destinations, $T_1,\ldots, T_n$,
hotels have been established at unique points $0 < t_1 < t_2 < \cdots < t_k$ with $k \leq n$.
Hotel~$r$ contains $d_r \ge1$ pilgrims, so that $\sum d_r = n$.
Let $R(s)$ be the number of travellers who have proceeded beyond point~$s > 0$,
i.e.,~$R(s) = \#\{i \le n \colon T_i > s\}$, so that $n - R(s)$ is the
number of travellers whose destination lies in $(0, s]$.
The toll rate $\tau(s) = 1/(\rho + R(s))$ is right-continuous and piecewise constant,
increasing from $1/(\rho+R(t_{r-1}))$ immediately before
hotel~$r$ to $1/(\rho + R(t_r))$ immediately after.
As always, the hotel tax is the log ratio
$\log(\tau(s) / \tau(s^-))$ of upstream to downstream toll rates.
Thus, the sequence of tolls and taxes faced by pilgrim $n+1$
on the first three legs of his journey are as follows:
\[
\arraycolsep=10pt
\begin{array}{lll}
\hbox{Interval} & \hbox{Total toll} & \hbox{Hotel tax} \cr
\noalign{\smallskip}
(0, t_1) &\displaystyle\frac{ t_1}{n+\rho} &\displaystyle{\log\biggl(\frac{n+\rho}{n+\rho - d_1} \biggr)} \cr
\noalign{\smallskip}
(t_1, t_2) &\displaystyle\frac{t_2 - t_1}{n+\rho - d_1} &\displaystyle{\log\biggl(\frac{n+\rho-d_1}{n+\rho - d_1-d_2}
\biggr)}\cr 
\noalign{\smallskip}
(t_2, t_3) &\displaystyle\frac{t_3 - t_2}{R(t_2) + \rho} &\displaystyle\log\biggl(\frac{R(t_2) + \rho}{R(t_3) + \rho} \biggr) \cr 
\end{array}
\]
where $R(t_2) = n - d_1 - d_2$.
The destination $T_{n+1} > 0$ is the point of maximum progress
permitted by the funds available,
i.e.,~the total tolls and taxes are such that
$\Ttx((0, T_{n+1})) \le X_{n+1} \le \Ttx((0, T_{n+1}])$.
If $X_{n+1} < t_1/(n+\rho)$, the pilgrim establishes a new hotel at $T_{n+1} = (n+\rho) X_{n+1}$;
otherwise he pays the full toll for the segment $(0, t_1)$.
If his remaining funds are sufficient to cover the hotel tax at $t_1$,
he does so, and his
pilgrimage continues in the same way until his funds are exhausted.
Otherwise, his destination is $T_2=t_1$.

\begin{figure}[ht]
\begin{center}
\includegraphics[height=10cm,width=12.5cm, angle=0]{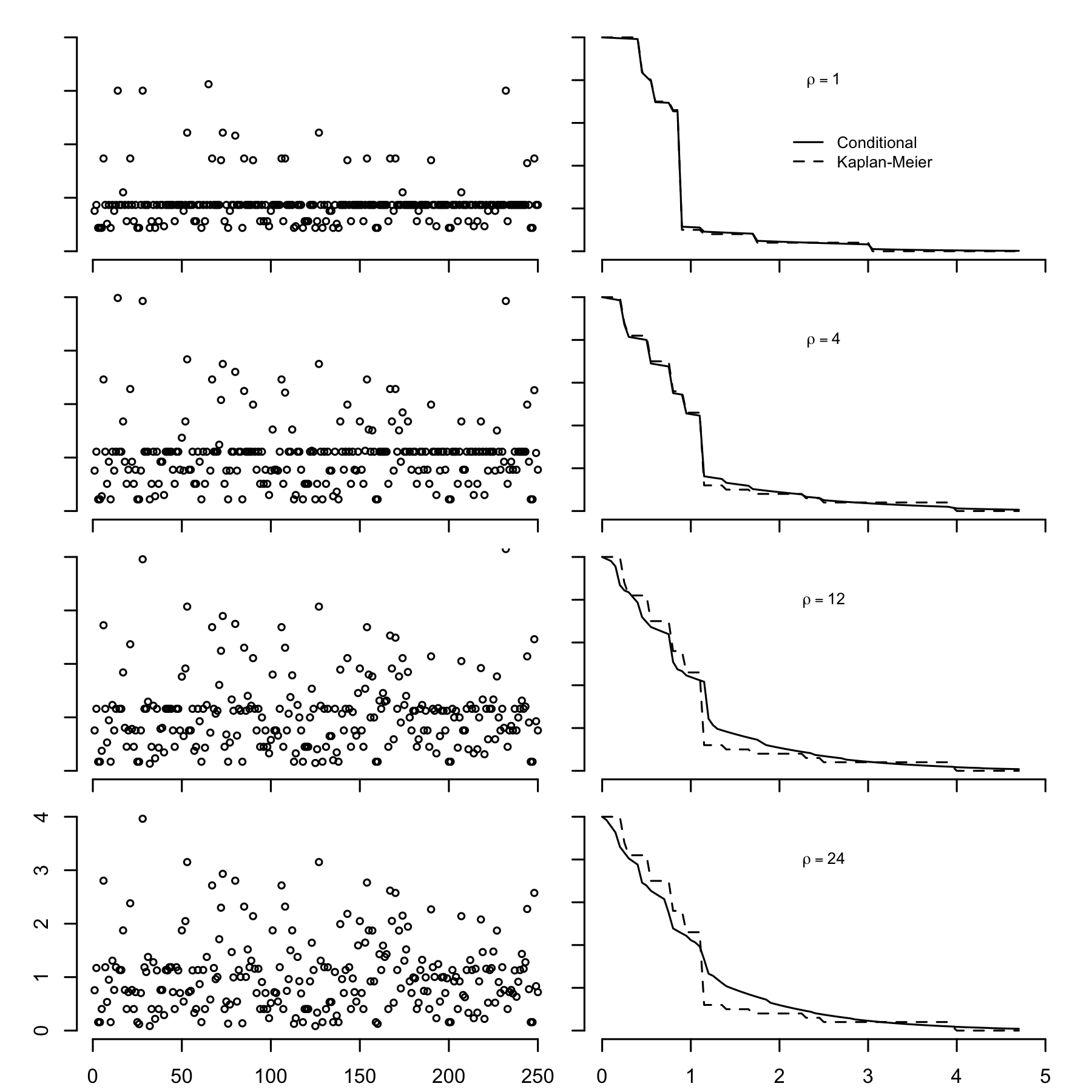}
\end{center}
\vspace{-0.5cm}
\caption{Simulated pilgrim process for $\rho=1, 4, 12, 24$.
The left panels show the sequence $\rho T_i$ for 250 pilgrims;
the right panels show the conditional survival distribution
$\pr(T_{51} > t \given T[50])$ (solid line) together with the Kaplan-Meier curve (dotted line),
both based on the first 50 values.
}
\label{simulated_pilgrim}
\end{figure}

\section{Illustration by simulation} \label{illustration}
The pilgrim process takes an input sequence $X_1, X_2,\ldots$,
and produces an output sequence $T_1, T_2,\ldots$ such that
$T[n]$ is a deterministic function of~$X[n]$, though not an invertible one.
The following is an example of a 10-component pair $X, T$ generated with $\rho=1$:
\begin{eqnarray*}
X &=& (0.36, 0.25, 0.36, 2.24, 0.40, 0.03, 1.17, 1.68, 3.31, 1.24, 0.35, 0.50,\ldots) \\
T_\rho (X) &=& (0.36, 0.36, 0.36, 1.12, 0.36, 0.18, 0.36, 0.85, 1.89, 0.85, 0.36, 0.36,\ldots),
\end{eqnarray*}
where $X$ is exact and $T$ is given to two decimal places.
In other words, the transformation $X\mapsto T$ is non-stochastic,
the only random element being the stochastic nature of the inputs.

The process is straightforward to simulate, but a simple algorithm
with minimal book-keeping is likely to be considerably less efficient than
an algorithm that keeps track of hotel positions and occupancy numbers,
updating them as needed.
For greater generality, one may include a second parameter $\nu > 0$ such that
the toll rate is $\tau(s) = \nu/(\rho + R(s))$, which does not affect the taxes.
Although not entirely obvious, the transformation $X\mapsto T_{\nu,\rho}(X)$ is such that
$\nu T_{\nu,\rho}(X) = T_{1,\rho}(X)$, so the effect is merely multiplicative. 
A reasonable argument may be made for taking $\nu \propto \rho$ as the default.
In particular, the toll after the final hotel would be $\lambda := \nu / \rho$ and 
limits as $\rho$ tends to $0$ or $\infty$ become well-defined.
However, unless otherwise stated, we take $\nu=1$.

The left panels of Figure~\ref{simulated_pilgrim} show four realizations of the process with the same input sequence, $n=250$, and parameters $\rho=1, 4, 12, 24$ respectively.
The right panels show the conditional survival distribution (solid line) for which
\[
-\log \pr(T_{m+1} > t \given T[m]) = \Ttx((0, t])
\]
is the sum of the posted tolls and taxes to be levied on one pilgrim,~$m=51$,
travelling from the origin to~$t+\epsilon$.

Considering only the taxes, we find that
\begin{eqnarray*}
S_n(t, \rho) = e^{-\Tx((0, t])} &=& \prod_{i : t_i \le t} \frac{\rho+R(t_i)}{\rho+R(t_i) + d_i}\\
S_n(t, 0) &=& \prod_{i : t_i \le t} \frac{R(t_i)}{R(t_i) + d_i}
\end{eqnarray*}
so that the limit as $\rho\to 0$ is the Kaplan-Meier survival curve.
In other words, $-\log S_n(t, 0)$ is the limit as $\rho\to 0$
of the sum of the hotel taxes posted for pilgrim~$n+1$ in $(0, t]$.
This is also shown in Fig.~1 for comparison, using only the first 50 values.
The sample size $m=50$ was kept sufficiently small to make the differences visible.
The Kaplan-Meier distribution has mass only at the hotels,
and the limiting tax at the final hotel is infinite.
(In the presence of censoring, the total mass or tax may be finite,
i.e.,~the Kaplan-Meier distribution may have positive mass at infinity,
and the Kaplan-Meier curve then differs from the limiting conditional survival distribution
only in the final interval following the last hotel.)

Exchangeability is apparent in the sense that any fixed permutation
of the sequence would look much the same.
It is also apparent that the process for small~$\rho$ is much more grainy
than the process for large~$\rho$.
The number of distinct values, i.e.,~the number of hotels,
in the four simulations is 15, 32, 74, and 100 respectively.

\begin{figure}[ht]
\begin{center}
\includegraphics[height=8cm,width=12.5cm, angle=0]{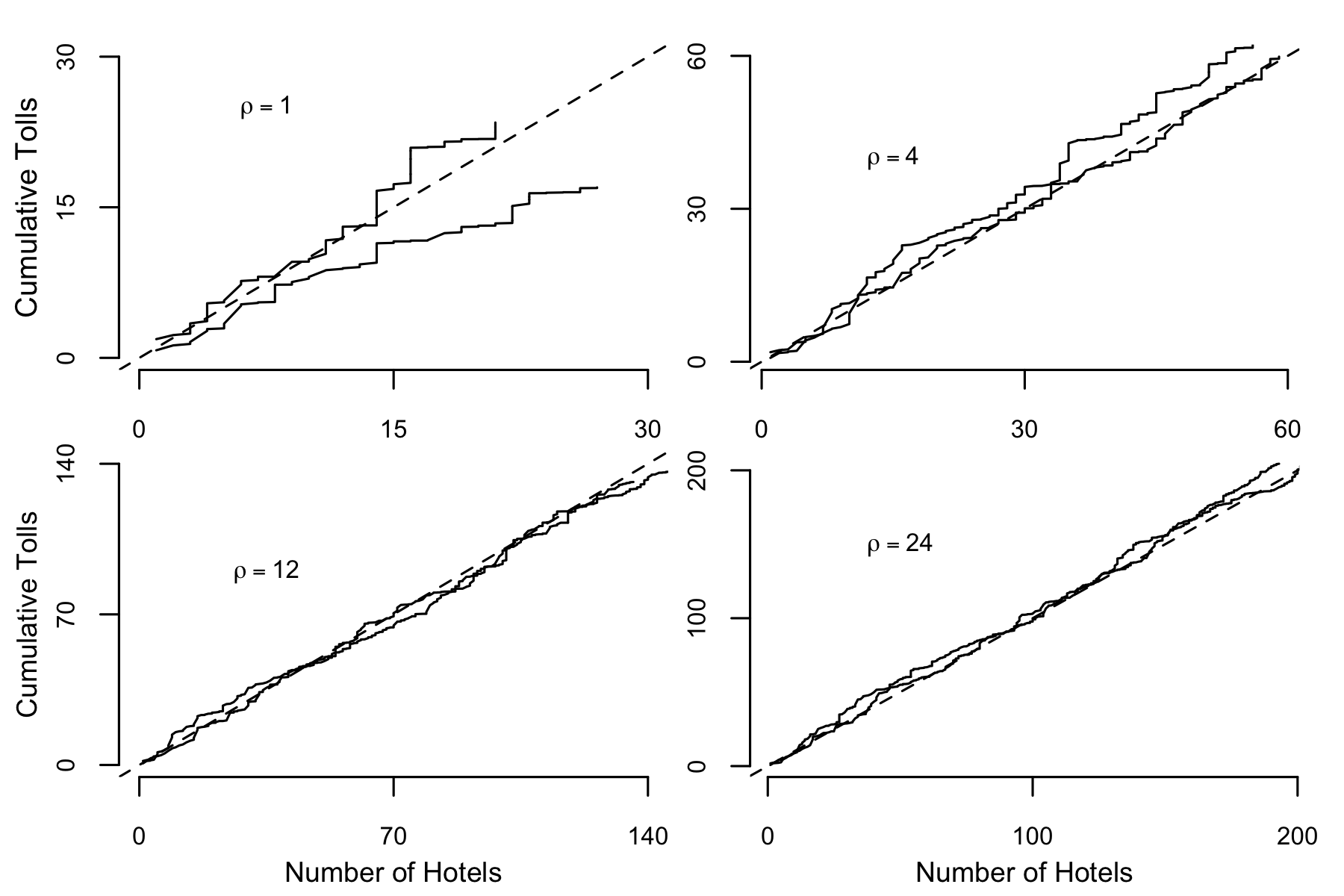}
\end{center}
\vspace{-0.5cm}
\caption{Cumulative tolls plotted against the number of hotels established.
Two replicates are shown per panel. The parameter values are $\rho=1, 4, 12, 24$ for the four pairs.
}
\label{cumtoll_vs_numhotel}
\end{figure}

Simulation allows us to keep track not only of hotel development
and hotel occupancy, but also the collection of tolls and taxes.
The $r$th pilgrim travelling through the sector $(t, t+dt)$
contributes $dt/(r-1+\rho)$ in tolls, so the total toll collected
in this sector is $Z(dt) = \zeta(R(t))\, dt$,
where $\zeta(n) = \sum_{j=0}^{n-1} 1/(\rho+j)$ and $\zeta(0) = 0$.
The total collected in tolls from all pilgrims passing through
the inter-hotel zone $(t_{i-1}, t_i)$ is $\zeta(R(t_{i-1})) (t_i - t_{i-1})$,
and the sum of these tolls is
$Z = \int_0^\infty \zeta(R(t))\, dt$.
The remainder $X_\subdot - Z$ is distributed in taxes and forfeits to the various hotels.
In other words, the process generates not only a partition of $[n]$ into
$k$~blocks which are ordered in space, but also a partition of
$X_\subdot$ by tolls and taxes into $2k$~parts that are also
linearly ordered along the route.

It is of some interest to examine the relationship between the number
of hotels,~$k$, and the total paid in tolls.
Both are increasing in~$n$.
Figure~\ref{cumtoll_vs_numhotel} suggests that they are essentially proportional for large~$n$,
with limiting ratio one independent of~$\rho$.   Specifically, the limiting ratio is equal to the parameter $\nu$.

\begin{figure}[ht]
\begin{center}
\includegraphics[height=8cm,width=12.5cm, angle=0]{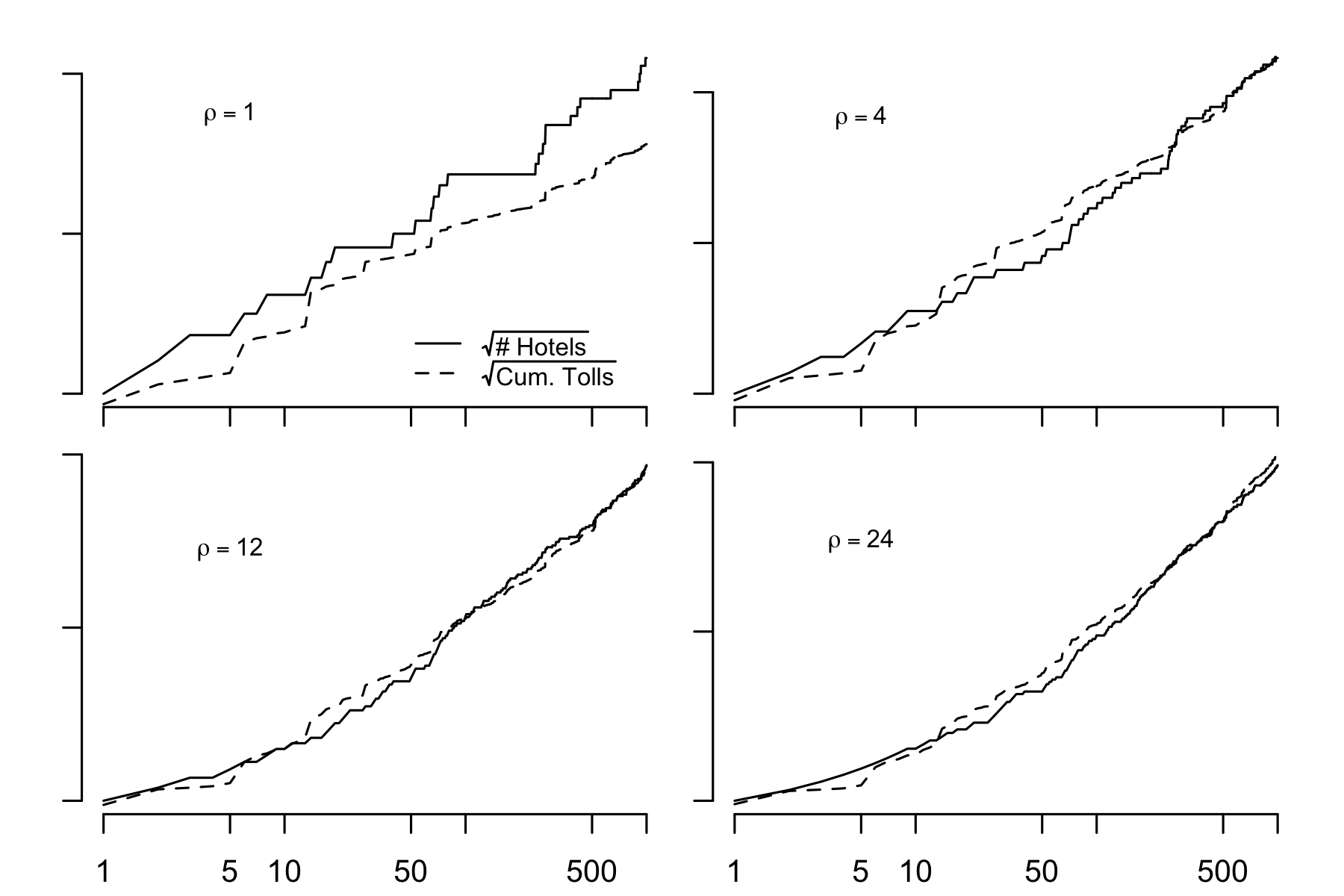}
\end{center}
\vspace{-0.5cm}
\caption{Square root of the number of hotels and square root of
accumulated tolls versus $\log(n)$.
The parameter values are $\rho=1, 4, 12, 24$ for the four panels.
}
\label{hot_tolls_vs_logn}
\end{figure}

Figure~\ref{hot_tolls_vs_logn} is a plot of $k^{1/2}$ against $\log(n)$, i.e.,~the square root
of the number of hotels against the log of the number of pilgrims,
one panel for each parameer value $\rho=1, 4, 12, 24$.
Also plotted is the square root of the accumulated tolls against $\log(n)$.
The simulation is in agreement with the claim that $k \propto \log^2(n)$,
at least for sufficiently large $n$.

Simulations also allow us to monitor the accumulated wealth of hotel proprietors,
either in temporal order of establishment or in spatial order relative to the origin.  The wealth of a hotel is the sum of the forfeits of its occupants plus the taxes paid by passers-by. Figure~\ref{tolls_dist_to_ownr} shows the total tax collected by each proprietor in temporal order of establishment (left) and spatial order (right).  In each case, there is an upper boundary linearly decreasing in the index.  The distribution of total tax collected is approximately uniform under temporal ordering from zero to the upper boundary.  Figure~\ref{tolls_dist_amg_prop} shows the total tax when wealth is distributed among occupants.  In this case,  the maximal wealth per occupant appears to grow initially and then decrease, reaching its peak at the median index independent of $\rho$.

\begin{figure}[ht]
\begin{center}
\includegraphics[height=8cm,width=12.5cm, angle=0]{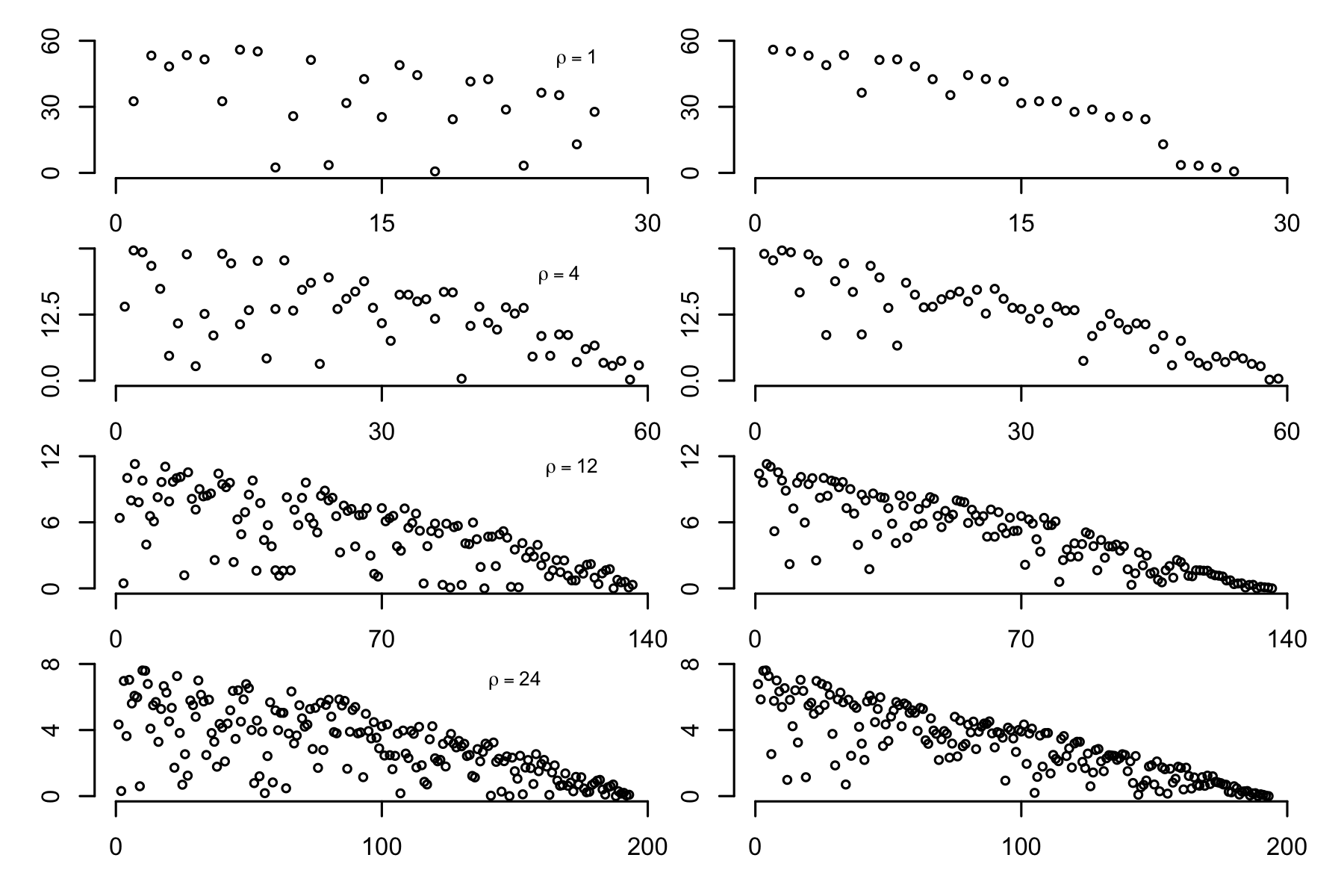}
\end{center}
\vspace{-0.5cm}
\caption{Total taxes per hotel proprietor ordered temporally (left) and spatially (right).
The parameter values are $\rho=1, 4, 12, 24$ for the four panels with $n = 1000$.
}
\label{tolls_dist_to_ownr}
\end{figure}

Finally, we investigate the size of hotels in both spatial and temporal order.  Figure~\ref{size_of_hotels} is a plot of the total number of residents in each hotel on a log-scale for various choices of $\rho$ under both orderings.  As $\rho$ tends to infinity, the number of residents in each hotel tends to one.  At the other extreme, as $\rho$ tends to zero, all individuals occupy the same hotel.  For intermediate values, there tends to be few large hotels with the remainder of moderate size.  The number of hotels of size $1$ is approximately Poisson with rate proportional to $\log (n)$. Under the spatial order relative to the origin, the larger hotels tend to exist closer to the origin.  Under temporal ordering, the larger hotels occur earlier; however, the bunching around early indices is less pronounced under this ordering.  The number of hotels is independent of the parameter $\nu$.

\section{Key behavior}
\label{key_behavior}
The pilgrim process is a mathematical exercise that can be studied at various levels
and in various ways.
The questions of interest are mostly related to the behaviour of the
sequence $T_1, \ldots, T_n$ both for finite~$n$ and the limit as $n\to \infty$.
Some very natural questions are related to the number and the sizes of the blocks,
i.e.,~the hotel occupancy numbers $(d_1,\ldots, d_k)$,
possibly, but not necessarily, ignoring order.
Other questions are concerned with expenditures, particularly the expenditure on 
taxes and forfeits versus the expenditure on tolls.
The total expenditure on tolls, $Z=\int_0^\infty \zeta(R(s))\, ds$,
is a function of $T_1,\ldots, T_n$, but the remainder is not.
As a function of~$n$, it appears that $Z$
is closely related to the number of blocks.

\newpage

\begin{figure}
\begin{center}
\includegraphics[height=9cm,width=12.5cm, angle=0]{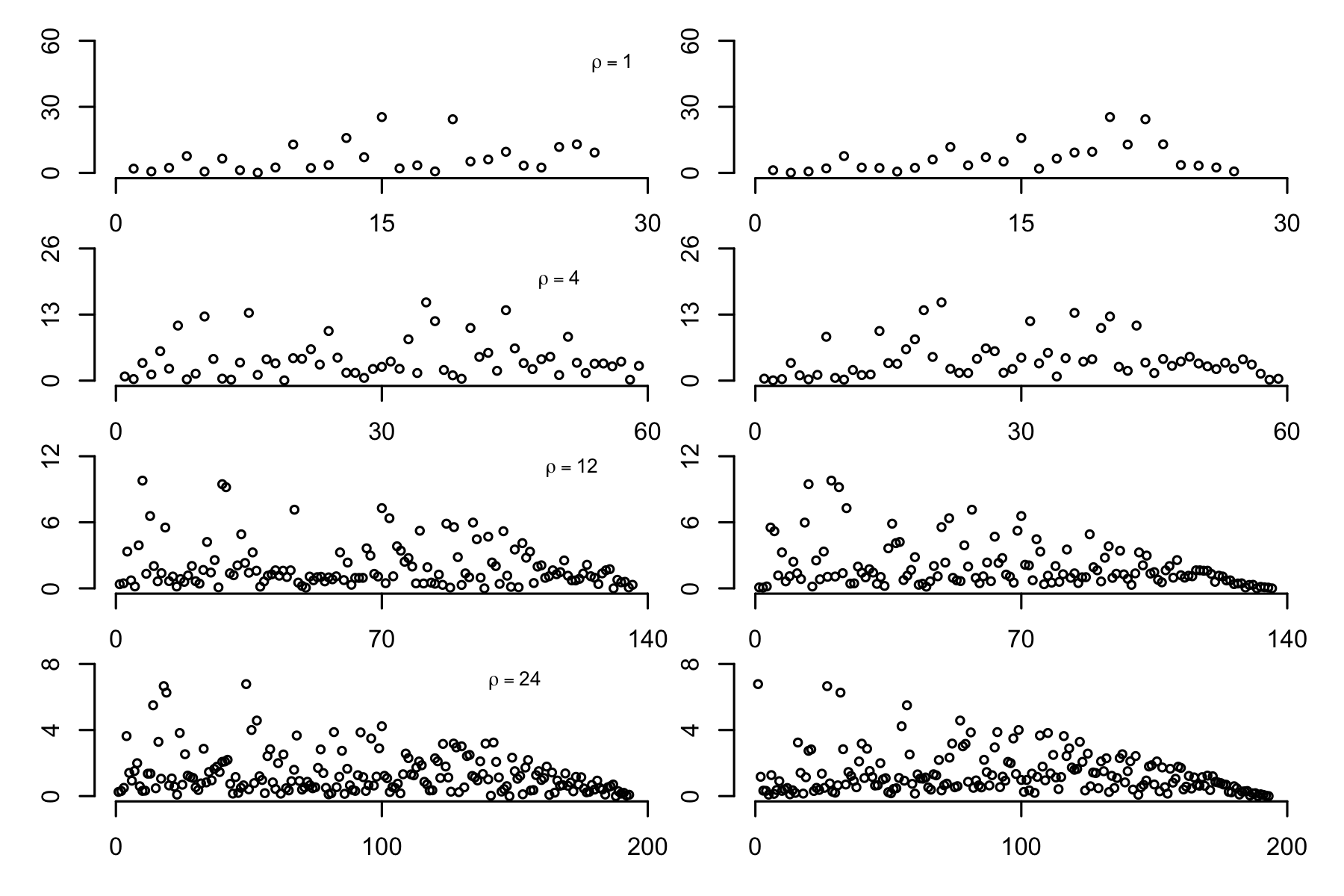}
\end{center}
\vspace{-0.5cm}
\caption{Wealth distributed among occupants versus temporal (left) and spatial (right) ordering.
The parameter values are $\rho=1, 4, 12, 24$ for the four panels for $n = 1000$.
}
\label{tolls_dist_amg_prop}
\end{figure}

\begin{figure}
\begin{center}
\includegraphics[height=9cm,width=12.5cm, angle=0]{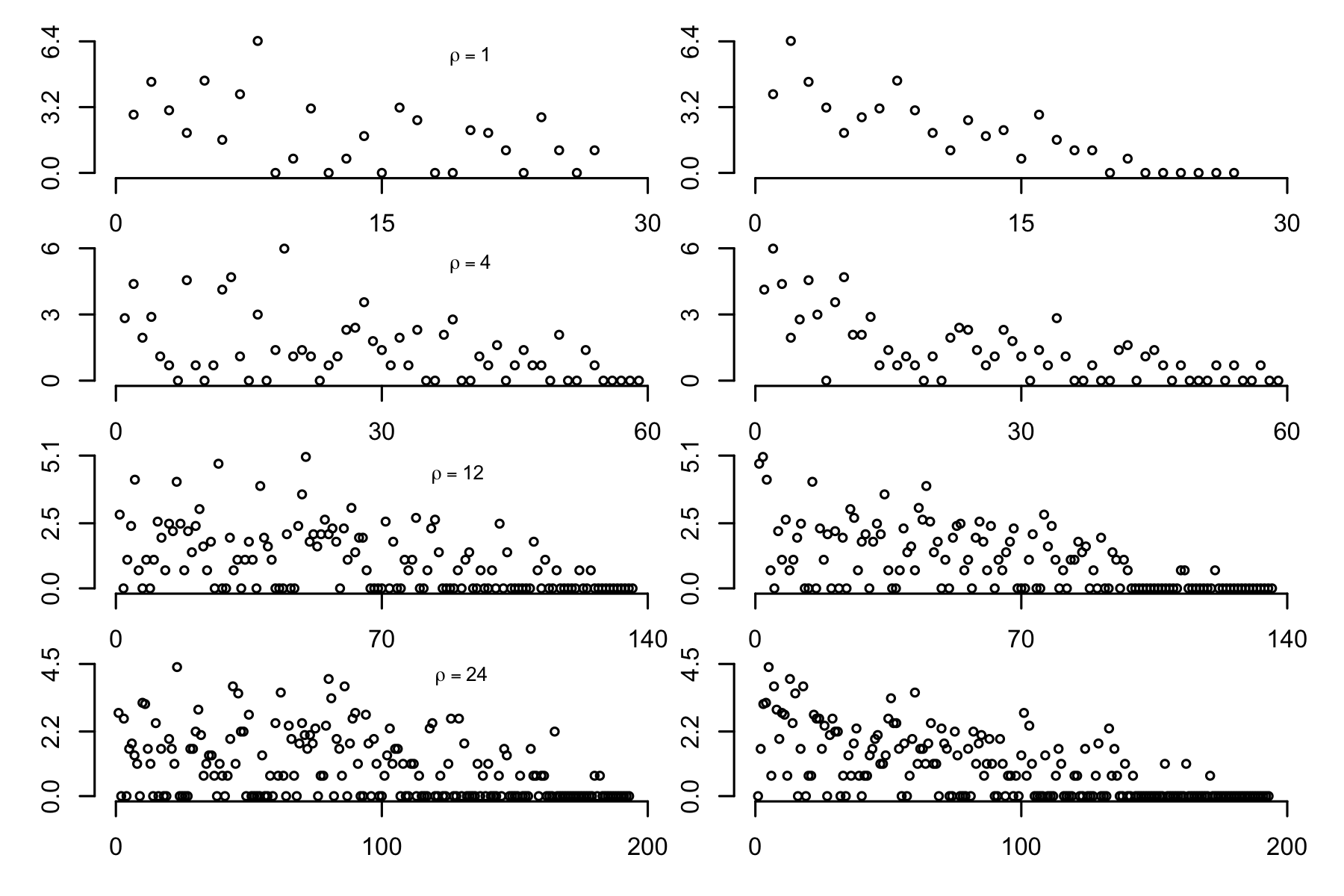}
\end{center}
\vspace{-0.5cm}
\caption{Log number of occupants per hotel versus temporal (left) and spatial (right) ordering.
The parameter values are $\rho=1, 4, 12, 24$ for the four panels for $n = 1000$.
}
\label{size_of_hotels}
\end{figure} 

\newpage

Here we review some of the key existing results found in Dempsey and McCullagh (2015).  

\begin{lemma}
The joint density function for the non-negative
sequence ${\T_n} = (T_1, \ldots, T_n)$ is
\begin{equation} \label{density}
\ascf \rho n f_n({\T_n}) d\T_n = \exp\Bigl(-\int_0^\infty \zeta(R_{\T_n}(s))\, ds\Bigr)
\prod_{r=1}^k \Gamma(d_r)
\end{equation}
where $R_{\T_n} (s) = \#\{i\le n \colon T_i > s\}$ and
$d_r = \# \{ i \le n : T_i = t_r )$ where $t_{r}$ is the~$r$th unique
time of $\T_n$ ($0 < t_1 < \ldots < t_k$ with $k \leq n$).
\end{lemma}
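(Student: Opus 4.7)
The plan is to prove~(\ref{density}) by induction on $n$, using the sequential construction of the pilgrim process. Since pilgrim $n+1$'s funds are exponential with unit mean, the conditional survivor is $\pr(T_{n+1}>t\given\T_n)=e^{-\Ttx((0,t])}$. From this I read off the mixed conditional distribution of $T_{n+1}$: continuous density $\tau(t)\,e^{-\Ttx((0,t])}$ at $t\notin\{t_1,\ldots,t_k\}$, and atom $(d_r/(\rho+R(t_r)+d_r))\,e^{-\Ttx((0,t_r))}$ at each hotel $t_r$, with $\Ttx((0,t_r))$ interpreted as the cost up to but not including the tax at $t_r$. The joint density at level $n+1$ is then $f_n(\T_n)$ times one of these two expressions, according as the new pilgrim founds a fresh hotel or joins an existing one.

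The base case $n=1$ is immediate: $T_1=\rho X_1$ has density $\rho^{-1}e^{-T_1/\rho}$, and the right-hand side of~(\ref{density}) reduces to $e^{-T_1/\rho}\,\Gamma(1)$, matching $\ascf{\rho}{1}f_1(T_1)=\rho\cdot\rho^{-1}e^{-T_1/\rho}$. For the inductive step, the change in $\int_0^\infty\zeta(R(s))\,ds$ when a pilgrim is added at $t$ equals $\int_0^t\tau(s)\,ds$, because the new risk process exceeds the old by $1$ on $(0,t)$ and $\zeta(r+1)-\zeta(r)=1/(\rho+r)=\tau(s)$.

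The key algebraic step is a telescoping of the tax exponential. Writing $a_r=\rho+R(t_r)$, the identity $a_r+d_r=\rho+R(t_{r-1})$ (with $R(t_0):=n$) yields
\begin{equation*}
\prod_{r'\le s}\frac{a_{r'}}{a_{r'}+d_{r'}}=\frac{\rho+R(t_s)}{\rho+n},
\end{equation*}
so the cumulative tax factor up to $u$ equals $(\rho+R(u))/(\rho+n)$ at any continuous $u$, and $(\rho+R(t_r)+d_r)/(\rho+n)$ when evaluated just below a hotel $t_r$. In Case~A (new hotel at $t$), multiplying $f_n(\T_n)$ by $\tau(t)\,e^{-\Ttx((0,t])}$ and applying this identity collapses $\tau(t)\cdot(\rho+R(t))/(\rho+n)$ to $1/(\rho+n)$, which together with $\Gamma(1)=1$ for the new block produces the required prefactor $1/\ascf{\rho}{n+1}=1/((\rho+n)\ascf{\rho}{n})$. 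In Case~B ($T_{n+1}=t_r$), the atom mass times $f_n(\T_n)$ cancels $\rho+R(t_r)+d_r$ against the telescoped tax factor, leaving $d_r/(\rho+n)$; the block size $d_r$ becomes $d_r+1$, and $d_r\Gamma(d_r)=\Gamma(d_r+1)$ absorbs the residual factor.

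I expect the main obstacle to be not the induction itself but the careful interpretation of $f_n(\T_n)\,d\T_n$ as a mixed measure: on each stratum indexed by an ordered partition $(B_1,\ldots,B_k)$ of $[n]$ it is Lebesgue in the $k$ distinct times $t_1<\cdots<t_k$. When the $(n+1)$-st pilgrim is added, Case~A extends the stratum by one dimension and Case~B keeps the dimension fixed, but in both cases the telescoping identity produces the same factor $1/(\rho+n)$ that converts $\ascf{\rho}{n}$ into $\ascf{\rho}{n+1}$. Once the bookkeeping is in place, the telescoping identity is the only nontrivial computation, and it is precisely what makes the right-hand side of~(\ref{density}) depend on the data only through the risk process $R$ and the block sizes $d_r$.
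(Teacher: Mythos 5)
Your induction is correct. The base case, the identification of the predictive law (continuous density $\tau(t)\,e^{-\Ttx((0,t])}$ off the hotels, atom $d_r/(\rho+R(t_r)+d_r)$ times the pre-tax survivor at $t_r$), the observation that adding a pilgrim with destination $t$ increases $\int_0^\infty\zeta(R(s))\,ds$ by exactly the tolls $\int_0^t ds/(\rho+R(s))$, and the telescoping of the tax product to $(\rho+R(u))/(\rho+n)$ are all exactly what is needed, and the two cases do produce the factor $1/(\rho+n)$ that turns $\ascf\rho n$ into $\ascf\rho{n+1}$, with $d_r\Gamma(d_r)=\Gamma(d_r+1)$ handling the occupied-hotel case; your remark about the mixed (stratified) nature of $f_n\,d\T_n$ is the right reading of the statement. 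Be aware, though, that the paper itself gives no proof of this lemma: it is quoted from Dempsey and McCullagh (2015), and the paper only supplies the ingredients, namely the remark on the predictive distribution (which powers your induction) and, in Section~\ref{literature}, the general neutral-to-the-right density written with the characteristic exponent, $\exp(-\int_0^\infty\zeta(R_T(s))\,ds)\prod_r(-1)^{d_r-1}(\Delta^{d_r}\zeta)(R_t(t_r))$. That second route gives an alternative, non-inductive derivation: for the pilgrim process $\zeta(t)=\psi(\rho+t)-\psi(\rho)$ one has $(-1)^{d-1}(\Delta^d\zeta)(r)=\Gamma(d)\,\Gamma(\rho+r)/\Gamma(\rho+r+d)$, and the gamma ratios telescope over the hotels (using $R(t_r)+d_r=R(t_{r-1})$, $R(t_k)=0$) to $\Gamma(\rho)/\Gamma(\rho+n)=1/\ascf\rho n$, recovering the lemma in one line. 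Your sequential argument buys a self-contained proof directly from the rules of the game without invoking the L\'evy/NTR machinery, at the cost of the stratum bookkeeping you flag; the characteristic-exponent route is shorter but presupposes the general density formula.
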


\begin{remark} \normalfont
The conditional distribution of $T_{n+1}$ given $T[n]= \T_n = (T_1,\ldots, T_n)$
has discrete atoms at each of the hotels $t_1,\ldots, t_k$, plus a remainder
that is continuous on $(0,\infty)$.
In particular, the conditional distribution is such that
\[
-\log\pr(T_{n+1} > t \given T[n]) = \hbox{tolls+taxes payable in }(0, t].
\]
Moreover, the conditional distribution given $T[n]$ is such that
$T_{n+1} \sim \min(X, X')$
where $X, X'$ are conditionally independent,
$X > 0$ is distributed continuously as a function of the tolls, 
and $X'$ is distributed on $\{t_1,\ldots, t_k, \infty\}$ as a function of taxes.
\end{remark}

\subsection{Weak continuity of predictions}

The conditional distribution of $T_{n+1}$ given
the sequence $\T_n=(T_1,\ldots, T_n)$ of previous failures
has an atom at each distinct failure time, and is continuous elsewhere.
The predictive distribution is weakly continuous if a small
perturbation of the failure times gives rise to a small
perturbation of the predictive distribution.
In other words, for each $n\ge 1$, and for each non-negative vector~$\bft$,
\[
\lim_{\epsilon\to 0} \pr(T_{n+1}\le x \given T[n]=\bft+\epsilon) =
\pr(T_{n+1} \le x \given T[n]=\bft)
\]
at each continuity point, i.e.,~$x> 0$ not equal to one of the failure times.

\begin{remark}
\label{rmk_cty}
Consider two adjacent hotels that are $\epsilon$-apart.  Suppose they are at located at 
$t_i$ and $t_{i+1} = t_i + \epsilon$ for some $i < k$ with $d_{i}$ and $d_{i+1}$ 
occupants respectively.  Then as $\epsilon \to 0$, we say the hotels ``merge'' and
there is one hotel at $t_i^\prime = t_i$ with $d_i^\prime = d_i + d_{i+1}$ occupants.
Weak continuity implies as hotels merge, the total taxes 
collected is the same as the sum of the taxes collected at each hotel individually.
\end{remark}

Remark~\ref{rmk_cty} shows that the pilgrim process has 
weakly continuous predictive distributions as hotel taxes are the log ratio of upstream
and downstream toll rates.  Theorem~\ref{cty_pred_thm}
shows that among a large class of exchangeable survival processes, 
this property is unique to the pilgrim process.

\begin{theorem}[Dempsey and McCullagh, 2015] 
\label{cty_pred_thm}
A Markov survival process has weakly continuous predictive distributions
if and only if it is a pilgrim process:
$\zeta(n) \propto  \psi(n+\rho) - \psi(\rho)$ for some $\rho > 0$.
The iid exponential model is included as a limit point.
\end{theorem}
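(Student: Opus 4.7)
For the \emph{if} direction, a direct calculation suffices. With $\zeta(n) = \psi(n+\rho) - \psi(\rho) = \sum_{j=0}^{n-1}1/(\rho+j)$, the associated hotel tax $h(n,d) = \log((n+\rho)/(n-d+\rho))$ telescopes,
$$h(n, d_1) + h(n-d_1, d_2) = \log\frac{n+\rho}{n-d_1+\rho} + \log\frac{n-d_1+\rho}{n-d_1-d_2+\rho} = h(n, d_1+d_2),$$
and by Remark~\ref{rmk_cty} this identity is exactly what weak continuity of the predictive distribution requires.

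For the \emph{only if} direction I would proceed in two stages. The first stage extracts the telescoping identity from weak continuity. Given a configuration $\bft$ with hotels at $0 < t_1 < \cdots < t_k$ and occupancies $d_1, \ldots, d_k$, perturb $t_{i+1}$ to $t_i + \epsilon$ while keeping the other coordinates fixed. Because the tolls-plus-taxes representation
$$-\log\pr(T_{n+1}>t \given T[n]) = \int_0^t \zeta(R(s))\,ds + \sum_{r:\, t_r\le t} h(R(t_r^-), d_r)$$
separates the continuous tolls from the atomic taxes, and the integral $\int_{t_i}^{t_i+\epsilon}\zeta(R)\,ds$ vanishes as $\epsilon \to 0$ while the merged configuration has one hotel with $d_i + d_{i+1}$ occupants, equating the two predictives at every $t > t_{i+1}$ forces
\begin{equation}
h(n, d_i) + h(n-d_i, d_{i+1}) = h(n, d_i + d_{i+1}), \qquad n = R(t_i^-). \label{merging_eq}
\end{equation}
Iterating \eqref{merging_eq} with $d_{i+1} = 1$ then gives $h(n,d) = H(n) - H(n-d)$ where $H(n) := \sum_{m=1}^n h(m,1)$, so weak continuity shrinks the tax function to a single free univariate sequence.

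The main obstacle is the second stage: upgrading this one-parameter freedom to the digamma form. Here I would invoke the projective consistency forced by the Markov survival process axioms---marginalising $T_{n+1}$ out of the $(n+1)$-density~\eqref{density} must reproduce the $n$-density---and substitute the constraint $h(n,d) = H(n) - H(n-d)$ into this consistency requirement. The resulting identity couples the increments of $H$ to the increments of $\zeta$, and the only solutions on the integers that remain positive and finite for every $n \geq 1$ are $H(n) = \log(n+\rho)$ and correspondingly $\zeta(n) \propto \psi(n+\rho) - \psi(\rho)$, parametrised by the single scale $\rho>0$. The iid exponential process emerges as the boundary $\rho \to \infty$ (with $\nu/\rho$ held fixed) in which the taxes vanish and the tolls become linear in $n$. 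The crux is the rigidity argument that no other sequence $H$ survives the Markov-consistency recurrence; once that is established, the remaining manipulations of gamma and digamma identities are routine.
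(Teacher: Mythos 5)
Your \emph{if} direction and the first stage of your \emph{only if} direction are sound and essentially reproduce the paper's reduction: after rewriting the tax through the characteristic index, $h(n,d)=\log\bigl[(\Delta^{d}\zeta)(n-d)/(\Delta^{d}\zeta)(n-d+1)\bigr]$, your merging identity $h(n,d_1)+h(n-d_1,d_2)=h(n,d_1+d_2)$ is exactly the paper's continuity condition $(\Delta\zeta)(r+d)/(\Delta\zeta)(r)=(\Delta^{d}\zeta)(r+1)/(\Delta^{d}\zeta)(r)$ for all $r\ge 0$, $d\ge 1$. The genuine gap is your second stage: the assertion that ``the only solutions on the integers that remain positive and finite are $H(n)=\log(n+\rho)$'' is precisely the uniqueness claim of the theorem, and you state it rather than prove it. Moreover, the tool you propose for it --- projective consistency of the Markov survival process --- cannot do the job by itself: every characteristic exponent $\zeta$ with forward differences of alternating sign already yields a consistent exchangeable process, so marginal consistency imposes no constraint that would single out the digamma family; and $H$ is not a free sequence to begin with, since $h(n,1)$ is determined by $\zeta$ (indeed $H(n)=\log\bigl[(\Delta\zeta)(0)/(\Delta\zeta)(n)\bigr]$). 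The rigidity must be extracted from the continuity condition itself, used simultaneously for all $d$.

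The paper's argument, which is the part you are missing, runs as follows: for each $n=r+d$ both sides of the continuity condition are linear in $\zeta_{n+1}$ given $\zeta_1,\dots,\zeta_n$, so whenever the coefficient of $\zeta_{n+1}$ is nonzero, $\zeta_{n+1}$ is a deterministic function of the earlier values; hence the whole sequence is pinned down by $(\zeta_1,\zeta_2)$, and since the pilgrim family satisfies the condition and, after standardizing $\zeta_1$, realizes every admissible value $\zeta_2=\zeta_1(1+c)$ with $c\in[0,1]$ through its parameter $\rho$, no other solution can exist. The degenerate case of a vanishing coefficient must then be handled separately: combined with the normalization $\zeta_n=\sum_{d=1}^{n}\binom{n}{d}(\Delta^{d}\zeta)(n-d)$ it forces $\zeta_n\propto n$, i.e.\ the iid exponential model, which is how the ``limit point'' clause of the theorem is actually established (your remark that it arises as $\rho\to\infty$ with $\nu/\rho$ fixed does not substitute for this case analysis). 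Without this linearity-plus-degeneracy argument, or an equivalent rigidity argument, your proposal shows only that weak continuity is equivalent to the telescoping condition, not that the pilgrim process is its unique solution.
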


A detailed derivation of this can be found in Appendix~\ref{app:cty_pred}.
Theorem~\ref{cty_pred_thm} provides justification for the use of the 
pilgrim process in applied work where observed tied failure times
are often the result of numerical rounding.

\subsection{Number of blocks \& block sizes}

Lemma~\ref{num_blocks} proves the relationship between the 
mean number of hotels and the number of pilgrims.

\begin{lemma}[Dempsey and McCullagh, 2015]
\label{num_blocks}
The number of blocks, i.e.,~the number of distinct components in $T_1,\ldots, T_n$,
is a random non-decreasing function of~$n$ 
whose mean satisfies the recursion
\begin{equation} \label{recurrence}
\mu_n = 1 + \frac1{\zeta(n)} \sum_{d=1}^n \frac{\mu_{n-d} } {d+\rho-1}.
\end{equation}
In particular, $\mu_n \propto \log^2(n)$ for large $n$. 
\end{lemma}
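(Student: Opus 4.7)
The strategy is to establish the recursion via the Markov self-similarity of the pilgrim process past its first hotel, then analyze the recursion asymptotically.

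For the recursion, let $D$ denote the size of the block at the hotel $t_1$ nearest the origin. By construction the toll rate $\tau(s)=1/(\rho+R(s))$ depends only on $\rho$ and the traffic $R(s)$, so conditional on the first hotel, the $n-D$ pilgrims beyond $t_1$ form an independent pilgrim process with the same parameter $\rho$. Hence $K_n \stackrel{d}{=} 1 + K'_{n-D}$ with $K'$ an independent copy of $K$, and $\mu_n = 1 + \sum_{d=1}^n P(D=d)\,\mu_{n-d}$. To identify $P(D=d)$ I would integrate the density~\eqref{density} in two stages: for a fixed ordered composition $(d_1,\ldots,d_k)$ of $n$ with $d_1=d$, the inter-hotel gaps $t_i - t_{i-1}$ integrate as independent exponentials with rates $\zeta(R_{i-1})$; then sum over all admissible compositions $(d_2,\ldots,d_k)$ of $n-d$, using the normalization that the reduced $(n-d)$-pilgrim process has total probability one. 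Combining these factors, together with the $\Gamma(d)$ term and the ratio of ascending factorials, yields the weight $1/[(d+\rho-1)\zeta(n)]$, which is a valid distribution because $\sum_{d=1}^n 1/(d+\rho-1) = \zeta(n)$. Substituting this into $\mu_n = 1 + E[\mu_{n-D}]$ produces the stated recurrence.

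For the asymptotic $\mu_n \propto \log^2 n$, the digamma expansion $\zeta(n) = \psi(n+\rho) - \psi(\rho) = \log n + O(1)$ sets the relevant scale. Rewriting the recursion as $\mu_n\zeta(n) = \zeta(n) + \sum_{d=1}^n \mu_{n-d}/(d+\rho-1)$ and testing the ansatz $\mu_n \sim A\log^2 n$, an integral comparison (substitute $y = n-d$ and split the range at $y=\sqrt{n}$) gives $\sum_{d=1}^n \log^2(n-d)/(d+\rho-1) = \log^3 n + O(\log^2 n)$, so both sides agree at leading order $A\log^3 n$, with sub-leading terms determining the constant $A$. The upper bound $\mu_n \leq A\log^2 n + B$ follows by induction using $\log(n-d)\leq \log n$ and $\sum_d 1/(d+\rho-1) = \zeta(n)$.

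The main obstacle is the matching lower bound, because the naive induction with $\mu_m \geq c\log^2 m$ does not absorb the additive ``$+1$'' term uniformly. I would close this either by the change of variables $\nu_n = \mu_n/\zeta(n)^2$ and showing $\nu_n$ is bounded below away from zero via a careful estimate of the sub-leading remainder in the integral comparison, or alternatively by invoking the L\'evy representation of the pilgrim process (neutral-to-the-right with Laplace exponent $\phi(\theta) = \rho[\psi(\theta+\rho) - \psi(\rho)]$) and applying Campbell's formula to the underlying subordinator to obtain a direct integral expression for $\mu_n$, from which the $\log^2 n$ lower bound can be read off.
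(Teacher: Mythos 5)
The paper itself gives no proof of Lemma~\ref{num_blocks}: it is quoted from Dempsey and McCullagh (2015, unpublished), so your proposal has to stand on its own, and it does not. Your overall strategy --- decompose on the first hotel, use the Markov/consistency property to get $\mu_n = 1 + E[\mu_{n-D}]$, then analyse the recursion asymptotically --- is the natural one, but the decisive step is exactly the one you wave through: ``combining these factors \ldots yields the weight $1/[(d+\rho-1)\zeta(n)]$.'' If you actually carry out that computation (or, equivalently, read off the first split from the splitting rule \eqref{splitting_rules} with $\beta=0$), the probability that the first hotel houses $d$ of the $n$ pilgrims is
\[
\pr(D=d) \;=\; \binom{n}{d}\,q(n-d,d) \;=\; \frac{1}{\zeta(n)}\cdot\frac{n!\;\Gamma(n-d+\rho)}{d\,(n-d)!\;\Gamma(n+\rho)},
\]
which is asymptotically $\frac{1}{\zeta(n)}\cdot\frac1d\bigl(1-\tfrac dn\bigr)^{\rho-1}$ (this is the form used in Appendix~\ref{app:pilg_poisson}) and reduces to $\frac{1}{\zeta(n)\,d}$ only when $\rho=1$. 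Your normalization check $\sum_{d=1}^n 1/(d+\rho-1)=\zeta(n)$ shows the claimed weights form a distribution, but that does not identify them as the law of $D$. A concrete test at $n=2$ makes the discrepancy visible: from the process description, $\pr(T_1=T_2)=1/(2\rho+1)$, so the mean number of blocks is $(4\rho+1)/(2\rho+1)$, whereas your weights (inserted into $\mu_2 = 1 + E[\mu_{2-D}]$ with $\mu_1=1$, $\mu_0=0$) give $(3\rho+2)/(2\rho+1)$; the two agree only at $\rho=1$. So either you must work with the correct first-block law $\binom{n}{d}q(n-d,d)$ and then reconcile the resulting recursion with the displayed form \eqref{recurrence} (which, taken literally, your own method contradicts for $\rho\neq1$), or you need a genuinely different decomposition; as written, the central identity of your proof is asserted rather than derived, and it is false in general.

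The asymptotic half is also incomplete by your own admission: the ansatz-plus-integral-comparison argument delivers a plausible upper bound, but the matching lower bound --- the part that actually forces the $\log^2 n$ growth rather than merely permitting it --- is deferred to two alternative strategies that are sketched, not executed. Note also that with the correct weights the analysis changes quantitatively (the factor $(1-d/n)^{\rho-1}$ matters for the constant, though not for the $\log^2 n$ order), so the lower-bound argument should be carried out for the recursion you can actually justify, not for \eqref{recurrence} as stated.
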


\begin{remark}
The total expenditure on tolls by the first $n$ pilgrims is 
$Z = \int_0^\infty \zeta(R(s))\, ds$,
so that $X_{\subdot} - Z$ is the tax levied by hotels.
The mean value, 
$E_n [ Z ] $, also satistifes equation~\eqref{recurrence}.  
Therefore, $E_n [Z] \to \mu_n$ as $n$ tends to infinity.
\end{remark}

A question of interest is how the
number of hotels of a given size grows as a function
of the number of pilgrims.
Below is a new result highlighting this fact for
the pilgrim process.

\begin{lemma} 
\label{pilg_pois_lemma}
The number of hotels of size $j$ is approximately Poisson 
with asymptotic rate proportional to $\log (n)/j$.
\end{lemma}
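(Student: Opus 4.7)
The plan is to use the completely random hazard representation of the pilgrim process and apply a Poisson limit theorem for functionals of a Poisson random measure. Since the pilgrim process is neutral to the right with characteristic index $\zeta(n) = \psi(n+\rho)-\psi(\rho)$, the cumulative hazard $A$ is a pure-jump L\'evy process with L\'evy measure
\[
\nu(dh) = \frac{e^{-\rho h}}{1 - e^{-h}}\, dh,
\]
as one checks by expanding $(1-e^{-nh})/(1-e^{-h})$ as a geometric series to recover $\int_0^\infty (1-e^{-nh})\,\nu(dh) = \zeta(n)$. Conditional on $A$, the failure times are i.i.d.\ with survivor $\exp(-A(\cdot))$, so a jump of $A$ at $(t,h)$ becomes a hotel whose occupancy, given $A$, is $\text{Binomial}(n, q(t,h))$ with $q(t,h) = e^{-A(t-)}(1-e^{-h})$ (composing ``pilgrim reaches $t$'' and ``stops at $t$ given reached'').

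Applying Campbell's theorem to the Poisson random measure of hazard atoms, and using that $A(t-)$ is independent of the atom at $t$, I get
\[
E[K_{n,j}] = \binom{n}{j}\int_0^\infty\!\!\int_0^\infty E\bigl[q(t,h)^j (1-q(t,h))^{n-j}\bigr]\,\nu(dh)\,dt.
\]
Expanding $(1-q)^{n-j}$, using $E[e^{-sA(t-)}] = e^{-t(\psi(s+\rho)-\psi(\rho))}$, and evaluating the $h$-integrals via the beta identity $\int_0^\infty(1-e^{-h})^{m-1}e^{-\rho h}\,dh = B(\rho, m)$ reduces the expression to a closed-form alternating sum. A saddle-point analysis around the dominant region $q\sim j/n$ (which corresponds to $A(t-)\sim \log n$, and hence $t$ of order $\log n$) then yields $E[K_{n,j}]\sim c_\rho\log(n)/j$; the sanity check $\sum_j E[K_{n,j}]\asymp \log^2 n$, matching Lemma~\ref{num_blocks}, is automatic.

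For the distributional limit, I would write $K_{n,j}$ as a sum of indicator variables $I_\alpha$ over atoms $\alpha$ of the Poisson jump measure and apply the Chen-Stein method. The $I_\alpha$'s would be independent if the occupancies at distinct atoms were drawn from independent pilgrim pools, but they in fact share the $n$ pilgrims, so distinct atoms are slightly negatively correlated. Each $\pr(I_\alpha = 1)$ is small (of order $\log(n)/(n\cdot\text{intensity})$), so the pairwise-dependence term in the Chen-Stein bound inherits this smallness, giving convergence in total variation to $\text{Poisson}(E[K_{n,j}])$. The main obstacle is this last step: quantifying that the cross-atom dependence induced by the shared pilgrim pool is asymptotically negligible, most naturally via a factorial-moment computation exploiting that, under the Laplace functional of $A$, integrals over disjoint $(t,h)$-cells approximately factorize.
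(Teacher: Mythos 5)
Your route is genuinely different from the paper's, and its skeleton is sound. The paper argues from the sequential splitting-rule description: for a split of $n$ individuals at risk, the chance that the new block has size $j$ is $q(n-j,j)\approx \frac{1}{j\,\zeta_n}$ with $\zeta_n=\psi(n+\rho)-\psi(\rho)\approx\log n$, and since the number of splits grows like $\log^2 n$ (Lemma~\ref{num_blocks}), a Poissonization heuristic gives a Poisson count with rate $\propto \log(n)/j$. You instead work in the de~Finetti (neutral-to-the-right) representation: your L\'evy measure agrees with the one the paper records, $w(dz)=e^{-\rho z}dz/(1-e^{-z})$ with exponent $\zeta(s)=\psi(s+\rho)-\psi(\rho)$; the occupancy of an atom at $(t,h)$ being Binomial$\bigl(n,\,e^{-A(t-)}(1-e^{-h})\bigr)$ given the hazard is correct; and the Mecke/Campbell first-moment identity together with $E[e^{-sA(t-)}]=e^{-t\zeta(s)}$ and the beta-function reduction is a legitimate exact computation. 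What your approach buys is an exact expression for $E[K_{n,j}]$ (and in principle the exact constant), plus a clear path to a genuine distributional statement; what it costs is that the Poisson approximation must then be controlled against two sources of dependence, the shared pool of $n$ pilgrims and the shared hazard path $A$, which you rightly flag as the remaining work. On that point you are no worse off than the paper, whose ``Poissonization argument'' is asserted rather than proved.

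One correction to your asymptotic step: the factor $\log n$ does not arise from a saddle point localized at $A(t-)\approx\log n$. For each fixed $t$ with $A(t-)\lesssim\log(n/j)$, the $h$-integral is dominated by the scale $1-e^{-h}\asymp j/(n e^{-A(t-)})$ and contributes a per-unit-$t$ rate $\approx 1/j$ essentially independent of $t$; the $\log n$ is the Lebesgue length of this $t$-range, since $A$ drifts at rate $\int h\,w(dh)=\Psi_1(\rho)$ (trigamma). Carried out this way, your computation gives $E[K_{n,j}]\sim \log(n)/(j\,\Psi_1(\rho))$ for fixed $j$, consistent with the lemma and with $\sum_j E[K_{n,j}]\asymp\log^2(n)/(2\Psi_1(\rho))$; incidentally this constant is twice the $1/(2\Psi_1(\rho))$ quoted in the paper's appendix, a discrepancy that does not affect the statement, which claims only proportionality.
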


The proof can be found in Appendix~\ref{app:pilg_poisson}.

\section{Connections with the literature}
\label{literature}

\subsection{Bayesian survival analysis}
The output of the pilgrim process is an infinitely exchangeable sequence,
which automatically has a de~Finetti representation as a conditionally
independent and identically distributed sequence.
It is in fact one of a large class of similar survival processes
investigated in detail by Hjort (1990), in which
$\Lambda$ is a stationary, completely independent, measure on $\Real$, and
$T_1,T_2\ldots$ are conditionally independent given $\Lambda$,
with distribution
\[
\pr(T_i > t \given \Lambda) = e^{-\Lambda((0, t])}.
\]
This means that $\Lambda$ is a L\'evy process with infinitely divisible increments,
which are necessarily stationary if the marginal distributions are to be exponential.
Processes of this type have been investigated by
Doksum (1974),
Ferguson and Phadia (1979),
Kalbfleisch (1978),
Clayton (1991)
and
James (2006).
They are sometimes called {\it neutral to the right.}
In general, the increments need not be stationary, there may be individual-specific weights $w_i$
such that $\pr(T_i > t \given \Lambda) = e^{- w_i \Lambda((0, t])}$,
and the observations may be subject to censoring.
It is important that these matters be accommodated in applied work,
and the processes in this class have the virtue that inhomogeneities and right-censoring are easily accommodated.
We have chosen not to emphasize this aspect of things because it tends
to obscure the underlying process.

Each process in the class is associated with an infinitely
divisible distribution $X = \Lambda((0,1))$ which has a characteristic exponent
$\zeta(t) = -\log(E(e^{-t X}))$ for $t \ge 0$, and an associated L\'evy measure.
This means that the multivariate survival function is
\[
\pr(T_1 > t_1,\ldots, T_n > t_n)
	= E\bigl(e^{-\Lambda(0, t_1] - \cdots - \Lambda(0, t_n]}\bigr)
	= \exp\Bigl(-\int_0^\infty\! \zeta(R_t(s))\, ds\Bigr).
\]
For example, the characteristic exponent and the L\'evy measure
for the gamma process considered by
Kalbfleisch (1978) and Clayton (1991) are
\[
\zeta^\star(t)=\log(1 + t/\rho)\quad\hbox{and}\quad w^\star(dz) = z^{-1} e^{-\rho z}\, dz.
\]
The characteristic exponent and the L\'evy measure for the pilgrim process
are 
\[
\zeta(t) = \psi(\rho+t) - \psi(\rho) \quad\hbox{and}\quad
w(dz) = e^{-\rho z}\, dz / (1 - e^{-z}),
\]
where $\psi$ is the derivative of the log gamma function.
These two pairs $\zeta,\zeta^\star$ and $w, w^\star$ are very alike
and they produce very similar processes,
but the limits as $\rho\to 0$ are not exactly the same.
The obvious attraction of the pilgrim process is that it has joint and conditional distributions that are simple to evaluate,
which does not, in itself, make it more suited for applied work than any of the other processes in the class.
Other processes in the same class include $\zeta(t) = t^\alpha$ for $0 < \alpha \le 1$,
which is quite different because it is not analytic at the origin.

In general, the joint density of $T[n]$ at times $T\in\Real^n$ may be written in terms of
the characteristic exponent
\begin{equation}\label{density}
\exp\Bigl(-\int_0^\infty \zeta(R_T(s))\, ds \Bigr) \times
\prod_{r=1}^k (-1)^{d_r-1} (\Delta^{d_r} \zeta)(R_t(t_r)),
\end{equation}
where $0 < t_1 <\cdots < t_k$ are the distinct components of~$t$,
$d_1,\ldots, d_k$ are their multiplicities, and
$\Delta^d \zeta$ is the $d$th order forward difference
\[
(\Delta^d \zeta)(x) = \sum_{j=0}^d (-1)^{d-j} \binomial d j \zeta(x+j). 
\]
The conditional density of $T_{n+1}$ given $T[n]=t$ is most easily described in terms
of the conditional hazard measure, which has an atom or tax
\begin{eqnarray*}
-\log \frac{(\Delta^{d_r} \zeta)(R_t(t_r)+1)} {(\Delta^{d_r} \zeta)(R_t(t_r))}
\end{eqnarray*}
at each distinct component of~$t$,
plus a continuous component with density
$(\Delta \zeta)(R_t(s))$ at $s > 0$.  A detailed description is provided
by Dempsey and McCullagh (2015).
In general, the hotel tax is not equal to the log ratio of upstream to
downstream toll rates.

Expression (\ref{density}) is also correct under right censoring provided that the product
is restricted to failures.
It is also correct under the inhomogeneous model with weights~$w_i$, provided that
$\zeta(R(t))$ is replaced with $\zeta(w(R(t)))$, and the forward differences at $t_r$ are
re-defined in the obvious way as an alternating sum over subsets of the $d_r$ residents at~$t_r$.
It is also correct under monotone temporal transformation provided that the
integral with respect to Lebesgue measure is transformed to an integral with respect
to another measure~$\nu$, and the Jacobian is included.
These modifications are sufficient to cover the great majority of the non-exchangeable
or non-homogeneous processes in this class.

\subsection{Neutral population genetics}

The non-negative sequence generated by the pilgrim process
can be thought of as a simple binary-tree embedded in continuous
time.  At each failure time~$t_i$ the remaining units are split
into those who fail,~$D_i$, and the set of units that remain alive,~$R_i$.
The tree-like structure is reminiscent of evolutionary tree
models studied in neutral evolutionary theory.
In particular, the pilgrim process has much in common with the beta-splitting
model (Aldous, 1996) introduced as a natural family for the
study of probability distributions on cladograms.  

A \emph{cladogram} is a visual representation of the relations 
among various species.  Unlike the trees associated with
the pilgrim process, the cladogram has no distinction
between right and left edges and are not embedded
in continuous time.  Figure~\ref{fig:cladogram} illustrates such
a tree; the figure suggests $A$ and $B$ are more closely related
compared to $\{ C, D,E \}$ and among these $D$ and $E$ are 
more closely related.

The beta-splitting model assumes the probability of splitting $n\geq 2$ individuals 
into two groups of size~$i \in \{ 1, \ldots, n-1 \}$ and~$n-i$ respectively is given by
\[
q_n (i) = \frac{1}{a_n (\beta) } \frac{\Gamma(\beta+ i ) \Gamma(\beta + n - i)}{\Gamma (i+1) \Gamma(n-i+1)}
\]
where $a_n (\beta)$ is the normalizing constant and $\beta > -1$. 
The beta-splitting model contains several fundamental examples, notably
the Yule model ($\beta = 1$), the uniform model ($\beta = -1/2$), and
the symmetric binary model as a limit ($\beta \to \infty$).
Examining equation~\eqref{splitting_rules} shows the obvious similarity
to our setting.  The key difference is that the splitting rule for 
non-negative sequences need not be symmetric. Moreover 
the number of failures at each failure time needs to be positive, 
but the risk set can be empty, $R_i = \emptyset$.

\begin{figure}
\centering
\begin{tikzpicture}[every tree node/.style={draw,circle},
   level distance=1.25cm,sibling distance=.5cm, 
   edge from parent path={(\tikzparentnode) -- (\tikzchildnode)}]
\Tree [ \edge node[auto=right] {$t_0$};
    [
    \edge node[auto=right] {$t_1$}; 
    [  
      [
      	[.A  ] [.B ]
      ]
    ]
    \edge node[auto=left] {$t_1^\prime$}; 
    [
    	[
    		[.C ]
    	]
       \edge node[auto=left] {$t_2$};
    	[
      		[.D ] [.E ]
      	]
    ] ]
    ]
\end{tikzpicture}
\caption{Example cladogram with associated holding times}
\label{fig:cladogram}
\end{figure}
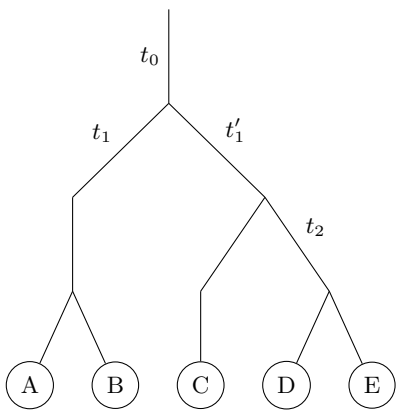

A question of interest is the conditional distribution 
of a new particle given an existing cladogram.  Given
an existing split of~$n$ particles into $i$ left and $n-i$ right particles,
the probability of the new particle going right
is
\begin{equation} \label{cladcond1}
\frac{n-i+\beta}{n + 2 \beta}.
\end{equation}
for $\beta \in (-2, \infty)$.
Now suppose starting with~$n$ particles there are $i$ consecutive left-singleton
splits, $\{ (1,n-1), (1,n-2), \ldots \}$.  Then the probability that the
particle goes right at each of these splits is
\begin{equation} \label{cladcond2}
\prod_{j=1}^i \frac{n-j+\beta}{n-j+1 + 2 \beta}.
\end{equation}
for $\beta \in (-2, \infty)$.
Equations~\eqref{cladcond1} and~\eqref{cladcond2}
are equal for $\beta = 0$, the latter smaller for $\beta >0$ 
and greater for $\beta < 0$.  

We can embed such a cladogram in continuous time by associating
with each edge a random holding time (see McCullagh, Pitman, and Winkel, 2008).
Figure~\ref{fig:cladogram} illustrates the holding times until each subsequent split.
Given $n$ particles, the holding time until the next fragmentation
is distributed exponential with rate parameter $\lambda_n$.  The 
sequence~$(\lambda_n)_{n\geq 2}$ must satisfy 
\[
\lambda_{n+1} ( 1 - q_n (1) ) = \lambda_n
\]
in order for the process to remain consistent and exchangeable.

To particle~$i$ is associated the non-negative sequence of
holding times~$(T^{(i)}_{1}, \ldots, T^{(i)}_{k_i})$.
The cladogram on $n$~particles embedded in continuous time can be summarized by 
the set of these non-negative sequences~$\{ (T^{(i)}_{j})_{j=1}^{k_i} \}_{i=1}^n$.
In Figure~\ref{fig:cladogram} both $A$ and $B$ are associated with the
sequence $(t_0, t_1)$, $C$ with $(t_0,t_1^\prime)$, and both $D$ and $E$
with $(t_0,t_1^\prime, t_2)$.

Here we translate Theorem~\ref{cty_pred_thm} to the setting of probability 
distributions on cladograms embedded in continuous time.  
The weak continuity condition is slightly different in this setting
as the distribution depends on the previous branch choices of 
the new particle. However, conditional on the branch up to a particular
point, the predictive distribution is weakly continuous if a small
perturbation of the future splitting times gives rise to a small
perturbation in the predictive distribution.
The theorem is a direct consequence of the equivalence of
equations~\eqref{cladcond1} and~\eqref{cladcond2} only 
when $\beta = 0$ (ie. the pilgrim process in this setting).
Theorem~\ref{branching_wkcty}
shows that in a large subclass of Markov branching models,
an analogous result to Theorem~\ref{cty_pred_thm} holds.

\begin{theorem}
\label{branching_wkcty}
A Markov branching process with splitting rule of the form
\[
q (i,j) \propto w(i) w(j)
\]
for some sequence of weights $w(j) \geq 0$, $j \geq 1$ has
weakly continuous predictive distributions if and only if it is either
the beta-splitting model with $\beta = 0$ or the comb
model ($\beta \to -2$).
\end{theorem}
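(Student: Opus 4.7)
The plan is to translate the weak continuity condition into a single functional equation on the weights $w$, and to show that its solutions among consistent Markov branching processes are exactly $\beta = 0$ and the comb boundary $\beta \to -2$. Within the product-form class $q(i,j) \propto w(i) w(j)$, sampling consistency (compatibility of $q_n$ and $q_{n+1}$ under uniform random deletion of one particle) restricts $w$ to the beta-splitting family $w(j) = \Gamma(\beta+j)/\Gamma(j+1)$ with $\beta \in (-2, \infty]$, following Aldous (1996). Under this parameterization, the probability that a fresh particle is routed to the right-hand size-$(n-i)$ block of a binary split $(i, n-i)$ is the expression in~\eqref{cladcond1}.

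To extract the functional equation I would consider a new particle at a node of size $n$ facing binary splits at times $t_1 < t_2 < \cdots < t_i$, each of type $(1, \cdot)$ with the new particle remaining in the large block at every step; the probability of this trajectory is~\eqref{cladcond2}. Letting the split times collapse to a common instant, weak continuity demands that the limiting event be probabilistically indistinguishable from a single binary split $(i, n-i)$ with the new particle going right, whose probability is~\eqref{cladcond1}. Equating the two at $i = 2$ yields $n-1+2\beta = n-1+\beta$, which forces $\beta = 0$; plugging $\beta = 0$ back shows both expressions telescope to $(n-i)/n$ for all $n$ and $i$, confirming sufficiency in the pilgrim case.

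The comb boundary $\beta \to -2$ must be handled separately because the weights concentrate on $j=1$ and~\eqref{cladcond1} is singular. In this regime every split is deterministically $(1, n-1)$, so only the sequential description~\eqref{cladcond2} is meaningful. By exchangeability alone, any given particle is removed at a specified step with probability $1/n$, so surviving $i$ consecutive singleton splits has probability $(n-i)/n$, matching the formal limit of~\eqref{cladcond1} once the singular factor is cancelled. Hence weak continuity also holds in this degenerate regime, while for any $\beta \in (-2, \infty) \setminus \{0\}$ the $i=2$ case already obstructs it.

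The main obstacle will be justifying that, under a general product-form weight, the routing probability of a new particle into the size-$(n-i)$ block genuinely coincides with the beta-splitting formula~\eqref{cladcond1}; this leans on the Aldous characterization of consistent product-form Markov branching rules. A secondary difficulty is the careful treatment of the $\beta \to -2$ limit, where the functional equation degenerates and one must instead appeal directly to the exchangeability structure of the deterministic $(1, n-1)$ splitting rule.
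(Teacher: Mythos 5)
Your proposal follows essentially the same route as the paper: reduce product-form consistent splitting rules to the beta-splitting family (the paper invokes McCullagh, Pitman and Winkel (2008) rather than Aldous (1996) for this step), then express weak continuity as the equality of \eqref{cladcond1} and \eqref{cladcond2} — which forces $\beta=0$, with the comb as the degenerate boundary case — exactly the argument the paper sketches in Section~\ref{literature} and formalizes in the appendix via the equivalent forward-difference identity. One small caution: your remark that the comb probabilities match ``the formal limit of \eqref{cladcond1} once the singular factor is cancelled'' is not right as a computation (the limit of $(n-i+\beta)/(n+2\beta)$ as $\beta\to-2$ is $(n-i-2)/(n-4)$, not $(n-i)/n$), so the direct exchangeability argument you give for the deterministic $(1,n-1)$ rule is the justification to rely on, mirroring how the paper treats the iid exponential limit in Theorem~\ref{cty_pred_thm}.
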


The \emph{comb} model is the maximally unbalanced model where 
only one particle splits from the rest at each time.  This is 
analogous to the~$iid$ setting for Markov survival processes
where ties do not occur with probability one.
In the case of neutral evolutionary 
theory, the fragmentations are assumed to happen simultaneously.  Consider
the above example.  The set $\{ A,B \}$ is assumed to split from
the set $\{ C,D, E\}$ simultaneously.  However, it may be the case 
that there are two splits occurring in close temporal proximity.  
As all other Markov branching models have discontinuous predictive
distributions, the use of the beta-splitting model for $\beta = 0$
in applications where simultaneous splits are likely the result
of approximation seems natural.
Even considering the cladogram when it is not embedded 
in continuous time, equality of equations~\eqref{cladcond1}
and~\eqref{cladcond2} may seem an appropriate assumption for
a neutral evolutionary tree model.

In a data example, Aldous (1996) finds the ``fit to the $\beta = [0]$ 
model is strikingly better than to the usual models.'' Blum and Francois (2006)
support this claim with analysis of two additional datasets.
Aldous asks if this ``is just a fluke?''  The above findings suggest not.
Indeed they suggest the model analogous to the pilgrim process
may be most fitting.  
It should be noted that the branching topology is a non-smooth
function of the value of $\beta$ with
particularly different behavior for $\beta = 0$.  As many previous
conclusions in neutral evolutionary theory assume a uniform 
branch split model ($\beta = -1/2$) underlying the evolutionary tree, 
these claims may need to be revisited under this new paradigm.

\section{Extensions of the pilgrim process}
\label{extensions}

The pilgrim process can be extended to a three parameter
family indexed by $\rho>0$, $\beta>-1$, and $\nu >0$.  
Here we continue to assume $\nu = 1$.  
For $\beta = 0$, the pilgrim process
is recovered.  The generalized pilgrim process exhibits different
asymptotic behavior as a function of $\beta$.

\subsection{Generalized pilgrim process}

\begin{definition} \normalfont
Considering only the taxes, the \emph{generalized pilgrim process}
assumes the hotel tax for the~$i$th hotel is given by
\[
\log \left( \frac{ R(t_{i-1}) + \rho}{ R(t_{i}) + \rho + \beta} \right)
\]
where $t_0 = 0$.  
Note that $\beta = 0$ yields the hotel tax for the pilgrim process.
The toll fees, $\tau (s)$, now depend on the choice of $\beta$.
\begin{enumerate}
\item For $\beta > 0$,
\[
\tau ( s ) = B ( \rho + R(s) + 1, \beta ) - B( \rho + R(s), \beta)
\]
where $B(\cdot, \cdot)$ is the beta function.
\item For $\beta \in (-1, 0)$,
\[
\tau ( s ) = B (\rho + R(s), \beta +1)
\]
\end{enumerate}
While the taxes are no longer the ratio of upstream and downstream
tolls, the toll choice preserves exchangeability of the joint
distribution for the non-negative sequence $(T_1, \ldots, T_n)$
for all $n$.
\end{definition}

Behavior of the generalized pilgrim process depends
on the choice of $\beta$ (Dempsey and McCullagh, 2015).  For $\beta > 0$, the 
number of hotels~$N_n$ grows logarithmically.  Moreover, the fraction of
individuals occupying each hotel admits a stick-breaking representation.
If $P_i$ denotes the fraction of individuals in hotel~$i$ under spatial ordering,
then
\begin{equation} \label{stick_breaking}
(P_1, P_2, \ldots) \overset{D}{=} (W_1, \bar{W}_1 \cdot W_2, \ldots)
\end{equation}
where $W_i = \text{beta} (\beta, \rho)$ and $\bar{W}_i = 1 - W_i$.

On the other hand, for $\beta \in (-1,0)$, the number of hotels
grows polynomially, $N_n = O(n^{-\beta})$.  
If $\# B_{1,n}$ denotes the number of occupants of the first 
hotel after $n$ pilgrims then
\begin{align*}
\lim_{n \to \infty} \pr ( \# B_{1,n} = d ) &\to g_{\beta} (d) \\
&=\frac{-\beta}{\Gamma(1+\beta)} \frac{\Gamma(d+\beta)}{\Gamma(d+1)} 
\propto d^{-1 + \beta}
\end{align*}
for large $d$.  So the number of occupants has a power law distribution of degree $1-\beta$.

Lemma~\ref{occupancy} gives new asymptotic results for the number of hotels of a given
size for the generalized pilgrim process.

\begin{lemma} \label{occupancy}
For $n \geq 1$, let $N_{n,j}$ be the number of hotels with occupancy $j$ for a 
generalized pilgrim process after $n$ pilgrims.  
\begin{enumerate}
\item For $\beta > 0$, $N_{n,j} \to_{D} Y_j$ 
as $n \to \infty$ where $Y_j$ 
is a Poisson random variable with $E ( Y_j ) = \frac{1}{\psi(\rho+\beta)-\psi(\rho)} j^{-1}$.
For $\beta = 1$ this corresponds to condition $E (Y_j ) = \rho/j$.
\item For $\beta \in (-1,0)$, asymptotically $\# B_{i,n} \ci \# B_{j,n}$ for all fixed
$i, j >0$.  Moreover, $N_{n,j}/N_n \to g_\beta (j)$ almost surely.
\end{enumerate}
\end{lemma}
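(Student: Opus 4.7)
The plan splits naturally along the two regimes of $\beta$, which exhibit qualitatively different cluster structures.

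For part~1 ($\beta>0$), I would exploit the stick-breaking representation~\eqref{stick_breaking} together with a Poissonization of the sample size. Given the proportions $(P_i)_{i\ge 1}$, pilgrim assignments are i.i.d.\ multinomial, so after replacing $n$ by an independent $\mathrm{Poi}(n)$---a standard device affecting only lower-order fluctuations---the hotel counts become conditionally independent $\mathrm{Poi}(nP_i)$ variables, and
\[
N_{n,j}\;\stackrel{d}{\approx}\;\sum_{i\ge 1} I\!\left[\mathrm{Poi}(nP_i)=j\right].
\]
Setting $V_k=-\log(1-W_k)$ with $W_k\sim\mathrm{Beta}(\beta,\rho)$ i.i.d., the sequence $\{-\log P_i\}$ is a renewal-type sum with mean increment $c:=\psi(\rho+\beta)-\psi(\rho)$. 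Blackwell's renewal theorem then gives asymptotic intensity $1/c$ on the real line, so the change of variables $y=-\log(nP_i)$ yields
\[
E[N_{n,j}]\;\longrightarrow\;\frac{1}{c}\int_{-\infty}^{\infty}\frac{e^{-jy-e^{-y}}}{j!}\,dy\;=\;\frac{1}{cj},
\]
which matches the stated mean and reduces to $\rho/j$ when $\beta=1$. Distributional convergence to $\mathrm{Poi}(1/(cj))$ would then follow by the same strategy used for $\beta=0$ in Lemma~\ref{pilg_pois_lemma}: a Chen--Stein estimate applied to the conditional Poisson-binomial, paired with a renewal CLT estimate showing that $E[N_{n,j}\mid(P_i)]$ concentrates at $1/(cj)$.

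For part~2 ($\beta\in(-1,0)$), the key tool is the \emph{restart property} of the generalized pilgrim process. Conditional on the first hotel's location $t_1$, jump size $z_1$, and occupancy $d_1=\#B_{1,n}$, each of the $n-d_1$ surviving pilgrims carries a residual $\mathrm{Exp}(1)$ resource (by memorylessness), and then faces a fresh L\'evy process started at $t_1$ (by the stationary independent increments of $\Lambda$). Hence the subsequent hotels form an independent generalized pilgrim process with the same parameters on $n-d_1$ pilgrims. Iterating, and using that $\#B_{i,n}=O_p(1)$ from the marginal limit $\#B_{1,n}\to_D g_\beta$, a straightforward induction on $K$ gives
\[
\bigl(\#B_{1,n},\ldots,\#B_{K,n}\bigr)\;\longrightarrow_D\;(Y_1,\ldots,Y_K),\qquad Y_i\stackrel{\mathrm{iid}}{\sim}g_\beta,
\]
which is the asymptotic independence claim. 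For the a.s.\ ratio $N_{n,j}/N_n\to g_\beta(j)$, I would combine the restart structure with $N_n\to\infty$ a.s.\ to obtain a strong law for a conditionally i.i.d.\ sequence; equivalently, the induced exchangeable random partition is a Pitman--Yor partition with index $\alpha=-\beta$, and Kingman's paintbox identifies its asymptotic small-block frequencies with $g_\beta$.

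The principal technical obstacle is in part~1: upgrading mean convergence to distributional Poisson convergence requires controlling the fluctuations of the stick-breaking proportions. A handful of ``central'' indices have $nP_i=\Theta(1)$ and contribute Bernoulli summands of order one, so the bare Chen--Stein bound is not small on its face and must be combined with a renewal CLT estimate showing that the conditional mean $E[N_{n,j}\mid(P_i)]$ degenerates to a point mass at $1/(cj)$, so that the limit is pure rather than mixed Poisson. In part~2 the analogous subtlety is extending the restart argument uniformly over all $N_n$ hotels, not merely the first $K$, so that the empirical ratio itself concentrates at $g_\beta(j)$---this requires an ergodic-type estimate rather than only finite-dimensional convergence.
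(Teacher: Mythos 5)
Your route is genuinely different from the paper's. For part 1 the paper works directly with the splitting rule: it approximates $q(r,d)\approx d^{-1}B(\rho,\beta)^{-1}(1-d/n)^{\rho-1}(d/n)^{\beta}$, sums the resulting expected contributions over successive splits (a ratio-test summability argument), and then normalizes by the expected total number of blocks $\log n/(\psi(\rho+\beta)-\psi(\rho))$ to obtain the mean $j^{-1}/(\psi(\rho+\beta)-\psi(\rho))$; the Poisson form of the limit is then asserted by the same Poissonization heuristic as in Lemma~\ref{pilg_pois_lemma}. Your stick-breaking/renewal computation reaches the same mean more transparently and correctly: $E[-\log(1-W_k)]=\psi(\rho+\beta)-\psi(\rho)$, Blackwell's theorem gives limiting intensity $1/c$, and the integral $\tfrac1c\int_{-\infty}^{\infty}e^{-jy-e^{-y}}\,dy/j!=1/(cj)$ checks out, including the $\beta=1$ specialization $\rho/j$. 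You also offer an argument for part 2, which the paper's appendix does not treat at all (it only handles $\beta>0$); note that the paper later derives Lemma~\ref{twoparam_lemma} \emph{from} the present lemma, so your use of the two-parameter CRP runs the logic in the opposite direction.

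Two concrete gaps remain. First, in part 1 the step you flag---showing that $E[N_{n,j}\mid(P_i)]$ degenerates to the constant $1/(cj)$---is not a removable technicality: it is generally false. The points $\{-\log P_i\}$ form a renewal process whose increments $-\log(1-W_k)$ are exponential only when $\beta=1$; for other $\beta$ the recentered point process converges to a stationary renewal process that is not Poisson, the conditional mean stays random in the limit, and your Chen--Stein scheme then delivers a renewal-decorated (mixed-Poisson-type) limit rather than a pure Poisson law. So your proposed resolution of the obstacle would fail for $\beta\neq1$, although the paper's own proof is no stronger on this point: it establishes only the expectation and appeals to a Poissonization heuristic. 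Second, in part 2 you invoke an exact identification of the induced partition with a Pitman--Yor partition of index $\alpha=-\beta$; the paper claims only convergence in distribution to the two-parameter CRP (exact equality holds only at $\beta=1$), and distributional convergence of restrictions does not transfer Pitman's almost-sure block-frequency results. Closing this requires either a direct Gibbs-type/EPPF verification for the induced partition or the ergodic strengthening of the restart argument, uniform in the number of blocks, that you anticipate but do not supply.
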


Proof of Lemma~\ref{occupancy} can be found in Appendix~\ref{app:occupancy}.
When $\beta > 0$, it follows from Lindeberg's theorem that
\begin{equation}
\label{asym1}
\frac{N_n - C_{\rho, \beta} \log (n)}{\sqrt{C_{\rho, \beta} \log (n) } } \to N (0,1)
\end{equation}
where $C_{\rho, \beta} = (\psi (\rho + \beta) - \psi (\rho) )^{-1}$. For $\beta = 1$ 
we have $C_{\rho, \beta} = \rho$.  For $\beta = 0$, we have a similar 
asymptotic result. Namely, there exists $C_\rho > 0$ such that
\begin{equation}
\label{asym2}
\frac{N_n - C_{\rho} \log^2 (n)}{\sqrt{C_{\rho} \log^2 (n) } } \to N (0,1).
\end{equation}
Equations~\eqref{asym1} and~\eqref{asym2} provide new asymptotic
normality results for the number of blocks of the generalized pilgrim process.

\subsection{Ordered partitions and splitting rules}
\label{split_rules}

A partial ranking of $[n]$ is an \emph{ordered list} $A = ( A_1, \ldots, A_k )$ consisting 
of disjoint non-empty subsets of $[n] = \{ 1, \ldots, n \}$ whose union is $[n]$.
The term partial ranking and ordered partition are used interchangeably.
The elements are unordered within blocks, but $A_1$ is the subset ranked first, $A_2$ 
the subset ranked second, and so on.  Let $\OP_n$ and $\OP$ denote the set of partial rankings
of $[n]$ and $\mathbb{N}$ respectively.
An ordered partition process $A$ is a probability distribution
on the infinite set $\OP$.

Restriction of the generalized pilgrim process to the first $n$ pilgrims
gives a joint distribution of the non-negative sequence $(T_1, \ldots, T_n)$. 
The non-negative sequence is in one-to-one correspondence 
with a pair of a partial ranking $A[n] \in \OP_n$ and interarrival 
times $S = (S_1, \ldots, S_k) \in \Real^k$
\[
(T_1, \ldots, T_n) \leftrightarrow A \times S
\]
where $k = \# A$ is the number of blocks.  

Here we are only interested in the partial ranking determined by 
the temporal ordering of the hotels and the number of occupants
of the generalized pilgrim process.  It is completely determined by a
splitting rule.  The chance that $r$ pilgrims can pay the total tax
of a hotel and $d$ cannot is given by
\begin{equation}
\label{splitting_rules}
q(r,d) = \frac{1}{\zeta_{r+d}} \frac{ \Gamma ( r+\rho ) \Gamma ( d + \beta)}{\Gamma (r+d + \rho + \beta)}
\end{equation}
where $\zeta_{r+d}$ is the normalizing constant.  For example,
if $\beta = 1$ then the splitting rule becomes
\begin{equation}
\label{split_beta1}
\frac{ \Gamma (r + \rho) \Gamma( d + 1 ) }{ \Gamma (r+d+\rho +1)} \frac{ r+d + \rho}{ (r + d) B(1, \rho)}
= \rho \frac{ \Gamma (r + \rho) \Gamma( d  ) }{ \Gamma (r+d+\rho )} \frac{d}{r + d}
\end{equation}
where $B(\cdot, \cdot)$ is the beta function.   
The splitting rule allow us to easily infer the induced probability distribution 
on partial rankings discussed in Section~\ref{clustering}.

Using the associated splitting rules,
the distribution of $A$ is given by
\begin{equation} \label{splitrules}
\pr ( A_1 , \ldots , A_k ) = \prod_{i=1}^k q( r_i, d_i )
\end{equation}
where $r_i = \sum_{j>i} \# A_j$ and $d_i = \# A_i$.
Equation~\eqref{splitrules} is important in establishing
the connection between the pilgrim process and Ewens
sampling formula.

\subsection{Pilgrim's voyage and recurrent events}

We now consider the process where each pilgrim must reach a final destination, $T^\star < \infty$.
Assume that pilgrim~$i$ has access to a sequence of funds, $(X_{i,1}, X_{i,2}, \ldots)$.

Initially, there are no hotels, the toll rate is uniform $1/\rho$ dollars per mile,
so the first traveller setting out from the origin
proceeds to the point $T_{1,1} = \rho X_{1,1}$ where he establishes the first hotel.
He then embarks on a second voyage with funds $X_{1,2}$ and establishes
a second hotel at $T_{1,2} = T_1 + \rho X_{1,2}$.  He continues in this manner
until reaching the final destination, $T^\star$.

At this stage, the toll rate at \emph{all} points is reduced to $1/(1+\rho)$,
and the hotel tax for all hotels is equal to the log ratio $\log((1+\rho)/\rho)$.
After $n$ pilgrims have set out from the origin
and reached the final destination, there are a set of 
established hotels which are the unique times of the set~$\{ T_{i,j} \}$.
Suppose hotel~$k$ has been occupied by $d_k \ge 1$ pilgrims,
then for the $n+1$st pilgrim the toll rate from $(0,T^\star)$ is constant equal to $1/(\rho + n)$,
while the hotel~$k$ tax is the log ratio
\[
\log \left( \frac{n + \rho}{n - d_k + \rho} \right).
\] 
This generates an infinitely exchangeable doubly-indexed 
non-negative sequence $\{ T_{i,j} \}$.  

The joint density function for 
the restriction of the sequence to $i \leq n$
is
\[
\rho^{\uparrow n} f_n (\T) = \exp \left( - T^\star \zeta ( n ) \right) \prod_{k=1} \Gamma( d_k )
\]
The pilgrim's voyage is useful for describing
sequences of recurrent event times.
The pilgrim's voyage can be generalized in a similar way as the 
pilgrim's monopoly.  Moreover, $\rho$ can depend on time as well 
as the number of previous event times.  This implies such a description
can easily be extended to incorporate time-inhomogeneity 
as well as historical dependencies.
Section~\ref{voyage_to_ibp} highlights the connection between
the pilgrim's voyage and the Indian buffet process.


\section{Exchangeable clustering and feature allocation models}
\label{clustering}


The goal of cluster analysis is to identify a set of mutually exclusive
and exhaustive subsets $(B_i)_{i=1}^k$ of a finite
set~$[n]$.  The idea is points in the set 
$B_i$ will represent a
homogeneous sub-population of the sample.  
The problem is an example of unsupervised learning
where there is no external information to aid in 
the decision making process.

As labelling of elements and the blocks are completely arbitrary, 
it is natural to disregard the labels and focus solely on the
underlying partition of the index set.  To this end, 
exchangeable cluster processes have been 
developed.
Below connects the current study of the pilgrim process
to the study of exchangeable partition processes.  
A connection to the Ewens sampling formula
is derived.

\subsection{Partition processes and partition structures}

A partition of $[n]$ is a set $B = \{ B_1, \ldots, B_k \}$ consisting of disjoint
non-empty subsets of $[n] = \{ 1, \ldots, n \}$ whose union is $[n]$.
The elements of $[n]$ are unordered within or among blocks.
Let $\P_n$ denote the set of partitions of $[n]$, and $\P$ the set of
partitions on the infinitely countable set $\mathbb{N}$.  
Let $\#B = k$ denote the number of blocks, and $\# b$ the size
of block $b \in B$.  The alternative notation $B = B_1 | B_2 | \cdots | B_k$ is
commonly used.



The Chinese restaurant process is a sequential description of a particular
infinitely exchangeable partition process.

\begin{definition}
Let $B[n]$ denote the restriction of a partition process $B \sim p$ to the $[n]$. 
Then the \emph{Chinese restaurant process} is defined sequentially.  
Imagine a restaurant with countably infinite tables indexed by $\mathbb{N}$.  
The tables are chosen accordingly:
\begin{enumerate}
\item The first customer sits at the first table.
\item The placement of the $n+1$st customer is determined by the conditional
distribution $p_{n+1} ( B[n+1] | B[n] )$.  The customer chooses a new table with probability
$\theta / (\theta + n)$ or one of the occupied tables with probability proportional to
the number of occupants.
\end{enumerate}
\end{definition}

The probability of a particular partition, $B[n]$, is given by
\[
p_n ( B[n] ) = \frac{  \theta^{\# B} \prod_{b \in B} \Gamma ( \# b) }{ \theta^{\uparrow n}} 
\]
where $\theta^{\uparrow k} = \theta \cdot ( \theta + 1 ) \cdots ( \theta+ k - 1)$ is 
the ascending factorial. 
This is the celebrated Ewens sampling formula, which arises in a variety of settings (Crane, 2015).

\subsection{From pilgrim process to Ewens sampling formula}

Consider generating a random partition $B \sim p$ via the generalized pilgrim process 
by extracting the ordered partition $A$ from the non-negative sequence $T$
and then simply ignoring the ordering of blocks. The map is not reversible, however the resulting
random partition is infinitely exchangeable.
\begin{equation} \label{seq_to_part}
T \to A \to B
\end{equation}
The probability of the random partition $B[n]$ is then
\begin{equation} \label{ordpar_to_part}
p_n ( B_1 | \ldots | B_k ) = \sum_{\sigma \in S_k} p_n (A_{\sigma(1)}, \ldots, A_{\sigma(k)} )
\end{equation}
where $S_k$ is the group of permutations of $k$ items.  Note the probabilities on the 
right-hand side are on partial rankings in $\OP_n$ while the left-hand side is a probability
on partitions in $\P_n$.

Theorem~\ref{crp_correspondence} provides an interesting connection between 
the Ewens sampling formula and the generalized pilgrim process.

\begin{theorem}
\label{crp_correspondence}
The random partition $B[n]$ obtained from the generalized pilgrim process for $\beta = 1$ and $\rho >0$
is equivalent in distribution to a random partition $B \in \P_n$ generated by the Ewens sampling formula
for $\theta = \rho >0$.
\end{theorem}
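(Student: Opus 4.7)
The plan is to compute the ordered-partition probability from the splitting rule of Section~\ref{split_rules} at $\beta=1$, simplify it into a product of a ``Ewens factor'' times a second factor that is itself a probability distribution on orderings, and then sum over all $k!$ orderings using \eqref{ordpar_to_part}.

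First I would substitute $\beta=1$ into \eqref{splitting_rules}. The paper already records the simplified form in \eqref{split_beta1}:
\[
q(r,d) = \rho\, \frac{\Gamma(r+\rho)\,\Gamma(d)}{\Gamma(r+d+\rho)}\cdot\frac{d}{r+d}.
\]
Given an ordered partition $A=(A_1,\ldots,A_k)$ with block sizes $d_i=\#A_i$ and tail sizes $r_i=\sum_{j>i}d_j$ (so that $r_i+d_i=r_{i-1}$ with $r_0=n$, $r_k=0$), equation \eqref{splitrules} gives
\[
\pr(A_1,\ldots,A_k) \;=\; \rho^k \prod_{i=1}^k \frac{\Gamma(r_i+\rho)\,\Gamma(d_i)}{\Gamma(r_{i-1}+\rho)}\,\prod_{i=1}^k \frac{d_i}{r_{i-1}}.
\]
The $\Gamma$-ratios telescope to $\Gamma(\rho)/\Gamma(n+\rho)=1/\rho^{\uparrow n}$, so
\[
\pr(A_1,\ldots,A_k) \;=\; \frac{\rho^k \prod_{i=1}^k \Gamma(d_i)}{\rho^{\uparrow n}}\,\cdot\,\prod_{i=1}^k \frac{d_i}{r_{i-1}}.
\]

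The first factor is exactly the Ewens weight for the unordered partition $B=\{B_1,\ldots,B_k\}$ with $\theta=\rho$ and is invariant under the permutation $\sigma\in S_k$. It therefore pulls out of the sum in \eqref{ordpar_to_part}, and all that remains is to show
\[
\sum_{\sigma\in S_k}\; \prod_{i=1}^k \frac{d_{\sigma(i)}}{d_{\sigma(i)}+d_{\sigma(i+1)}+\cdots+d_{\sigma(k)}} \;=\; 1.
\]
Here I would observe that each summand is the probability of drawing the blocks in the order $\sigma(1),\ldots,\sigma(k)$ under \emph{size-biased sampling without replacement}: at each step one selects an as-yet-unpicked block with probability proportional to its size, which produces a probability distribution on $S_k$ and hence the sum is $1$. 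Combining with \eqref{ordpar_to_part} yields
\[
p_n(B_1|\cdots|B_k) \;=\; \frac{\rho^k \prod_{i=1}^k \Gamma(d_i)}{\rho^{\uparrow n}},
\]
which is the Ewens sampling formula at $\theta=\rho$.

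The only nontrivial step is the recognition of $\prod_i d_{\sigma(i)}/r_{\sigma,i-1}$ as a size-biased permutation probability; the rest is the telescoping cancellation of gamma functions, which occurs precisely because the $\beta=1$ splitting rule factors as $q(r,d)\propto \Gamma(r+\rho)\cdot d\,\Gamma(d)/(r+d)$ with a $1/\Gamma(r+d+\rho)$ that matches the $\Gamma(r_{i-1}+\rho)$ of the previous factor. This is also the place where $\beta=1$ is essential: for other values of $\beta$, the extra $\beta$ inside $\Gamma(d_i+\beta)$ and $\Gamma(r_i+d_i+\rho+\beta)$ breaks both the telescoping and the size-biased interpretation, which is why a correspondence with Ewens holds only at $\beta=1$.
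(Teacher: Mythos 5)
Your proposal is correct and follows essentially the same route as the paper's own proof: substitute $\beta=1$ into the splitting rule, telescope the gamma ratios via $r_i+d_i=r_{i-1}$ to expose the Ewens weight $\rho^k\prod_i\Gamma(d_i)/\rho^{\uparrow n}$, and then sum out the residual factor $\prod_i d_{\sigma(i)}/r_{\sigma,i-1}$ over $S_k$ by recognizing it as the probability of a size-biased permutation (sampling blocks without replacement with probability proportional to size), exactly the law-of-total-probability step used in the paper.
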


\begin{proof}
The splitting rule for $\beta = 1$ is given by equation~\eqref{split_beta1}. 
By equation~\eqref{splitrules}, this implies the probability of a particular partial
ranking is 
\begin{align*}
p_n ( A_1, \ldots, A_k ) &= \rho^k \prod_{j=1}^k \frac{ \Gamma (d_j +\beta) \Gamma(r_j + \rho) }{ \Gamma ( r_j + d_j + 1+ \rho) } \frac{\rho + r_j + d_j }{r_j + d_j } \\
&= \frac{\rho^k \Gamma ( \rho)}{\Gamma( \rho + n)} \prod_{j=1}^k \Gamma ( d_j ) \cdot \prod_{j=1}^k \frac{d_j}{r_j + d_j} \\
&= p_n ( B_1 | \ldots | B_k ) \cdot \prod_{j=1}^k \frac{d_j}{r_j + d_j}
\end{align*}
where cancellation is a consequence of $r_{j} = r_{j+1} + d_{j+1}$.  Without loss of generality, we 
assume this is the canonical labeling ($\sigma  =$ identity) of the ordered partition.  Applying 
equation~\eqref{ordpar_to_part}, the proof is complete if we can show
\[
\sum_{\sigma \in S_k} \prod_{j=1}^k \frac{ d_{\sigma (j)} }{ d_{\sigma (j) } + r_{\sigma (j)} } = 1
\]
where $d_{\sigma(j)} = \# A_{\sigma(j)}$ and $r_{\sigma (j)} = \sum_{\sigma (i) > \sigma (j) } \# A_{\sigma(i)}$.

Consider sampling without replacement from $k$ particles with weights proportional to $(d_1, \ldots, d_k)$.  
Then the probability of a given order of sampling of the particles, $\sigma \in S_k$, is given by
\[
\pr ( \sigma ) = \prod_{j=1}^k \frac{ d_{\sigma (j)} }{ d_{\sigma (j) } + r_{\sigma (j)} } 
\]
Therefore by law of total probability we have the sum over all possible orderings is
equal to one as desired.
\end{proof}

While the mapping given by equation~\eqref{seq_to_part} is noninvertible, the proof
to equation~\eqref{crp_correspondence} provides a means for taking a random
partition from the Ewens sampling formula and generating a random ordered
partition equivalent in distribution to that generated by the generalized pilgrim
process by sampling without replacement from the blocks~$b \in B$ with probabilities 
proportional to the size of the blocks $\# b$.

The stick-breaking representation given by equation~\eqref{stick_breaking}
shows that asymptotic frequencies of the first block of the random ordered
partition and the first block of the Chinese restaurant process with blocks
in size-biased order are equal in distribution.  However, sequential construction
via the generalized pilgrim process shows that for each $n$, there is a non-zero
chance that the first block becomes a singleton.  Therefore, while asymptotically
equivalent the finite-dimensional distributions are not.  We now state a 
straightforward corollary to Theorem~\ref{crp_correspondence}.

\begin{corollary}
The one-parameter Chinese restaurant process is a subfamily of the
partition process induced by the generalized pilgrim process. 
\end{corollary}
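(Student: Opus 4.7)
The plan is to read the corollary as an immediate consequence of Theorem~\ref{crp_correspondence}, together with the defining fact that the one-parameter Chinese restaurant process with concentration $\theta > 0$ is precisely the infinitely exchangeable partition process whose finite-dimensional distributions are given by the Ewens sampling formula with parameter $\theta$. No new calculation is required; the work is entirely one of matching parametrizations, so the argument reduces to bookkeeping.

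First I would note that the generalized pilgrim process is a two-parameter family indexed by $\beta > -1$ and $\rho > 0$, and that the map $T \to A \to B$ of~\eqref{seq_to_part} pushes this family forward to a two-parameter family of infinitely exchangeable partition processes on $\P$. The Chinese restaurant process, by contrast, is a one-parameter family indexed by $\theta > 0$. To realize the CRP of parameter $\theta$ inside the pilgrim-induced family, I would fix the slice $\beta = 1$, $\rho = \theta$ of the generalized pilgrim process.

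By Theorem~\ref{crp_correspondence}, along this slice the induced random partition $B[n] \in \P_n$ has distribution equal to the Ewens sampling formula with parameter $\theta$ for every $n \geq 1$. Since an infinitely exchangeable partition process is determined by its consistent sequence of finite-dimensional distributions on $(\P_n)_{n \geq 1}$, this identification at every finite $n$ lifts to equality of the full processes on $\P$. As $\theta$ ranges over $(0, \infty)$, this exhibits the entire one-parameter CRP family as the $\beta = 1$ cross-section of the two-parameter family induced by the generalized pilgrim process.

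There is no genuine obstacle; the only point worth flagging is that consistency of the Ewens distributions across $n$ is what allows the finite-dimensional matching supplied by Theorem~\ref{crp_correspondence} to be promoted to a process-level identification, but this is standard and already implicit in the statement of that theorem.
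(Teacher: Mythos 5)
Your argument is correct and matches the paper's route exactly: the paper offers no separate proof, presenting the corollary as an immediate consequence of Theorem~\ref{crp_correspondence}, which is precisely your identification of the one-parameter CRP with the $\beta=1$, $\rho=\theta$ slice of the pilgrim-induced partition family. Your added remark that consistency of the Ewens finite-dimensional distributions lifts the identification to the process level is a harmless piece of bookkeeping the paper leaves implicit.
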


For $\beta >0$, the two-parameter extension of the one-parameter Chinese
restaurant process allows us to control the relative frequencies.  That is,
the expected asymptotic frequencies for the first and second blocks 
are $\beta/ (\rho + \beta)$ and $\rho \beta/ (\rho + \beta)^2$ respectively.  So the 
parameter $\beta$ controls the ratio of the expected asymptotic frequencies, $\rho/ (\rho + \beta)$.

For $-1 < \beta < 0$ and $\rho > 0$, the construction is similar to the
two-parameter Chinese restaurant process.  For $0 < \alpha < 1$ and $\theta >0$, 
the two-parameter seating model has customers choosing a new table with
probability proportional to~$\theta + \alpha \, \# B$ or one of the occupied
tables~$b \in B$ with probability proportional to $\# b - \alpha$.
The number of blocks grows at the same
asymptotic rate, $n^{\alpha}$.  
Lemma~\ref{twoparam_lemma} is a consequence of 
Lemma~\ref{occupancy} and Lemma 3.11 of Pitman (2005, pg. 71)
which provides an asymptotic distribution for the 
proportion of blocks of size~$k$ for the two-parameter seating model.

\begin{lemma}
\label{twoparam_lemma}
For $\beta = - \alpha \in (-1, 0)$ and $\theta = \rho > 0$ the 
induced partition from the generalized pilgrim process 
converges in distribution to the distribution of the random
partition generated by the two-parameter Chinese restaurant process.
\end{lemma}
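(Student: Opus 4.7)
The plan is to apply Lemma~\ref{occupancy}(2) to identify the asymptotic block-size structure of the partition induced by the generalized pilgrim process, and then match it to the asymptotic structure of the two-parameter CRP$(\alpha, \theta)$ given by Pitman's Lemma 3.11 (Pitman, 2005, p.~71).

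First I would invoke Lemma~\ref{occupancy}(2): for $\beta \in (-1, 0)$ and $\rho > 0$, the generalized pilgrim process produces $N_n = O(n^{-\beta})$ blocks, the sizes $\# B_{i,n}$ and $\# B_{j,n}$ are asymptotically independent for fixed $i, j$, and
\[
\frac{N_{n,j}}{N_n} \to g_\beta(j) = \frac{-\beta}{\Gamma(1+\beta)} \cdot \frac{\Gamma(j+\beta)}{\Gamma(j+1)}
\]
almost surely. Setting $\beta = -\alpha$ with $\alpha \in (0,1)$ and $\theta = \rho$, a direct calculation yields $g_{-\alpha}(j) = \alpha\,\Gamma(j-\alpha)/(\Gamma(1-\alpha)\Gamma(j+1))$, which is exactly the asymptotic fraction of blocks of size $j$ for the two-parameter CRP$(\alpha, \theta)$ produced by Pitman's Lemma 3.11. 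The growth exponents also agree, as $n^{-\beta} = n^\alpha$.

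Both processes yield infinitely exchangeable partitions of $\mathbb{N}$, so by Kingman's paintbox representation each is determined by its asymptotic ranked mass partition. Having matched the almost sure block-size spectrum, the $n^\alpha$ growth rate of $N_n$, and the asymptotic independence of block sizes, both ranked mass partitions coincide in distribution with the Poisson--Dirichlet $\text{PD}(\alpha, \theta)$, which delivers the stated convergence in distribution.

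The main obstacle is lifting the agreement of marginal block-size frequencies to agreement of the full joint law on ranked mass partitions. Both processes produce Gibbs-type partitions whose splitting rules have the form~\eqref{splitting_rules}, and for this family the pair $(\alpha, \theta)$ determines the distribution uniquely via the Gnedin--Pitman classification, so matching the spectrum and the growth exponent suffices. Without appealing to this abstract classification, one can alternatively match the sampling recursions induced by the splitting rule~\eqref{splitting_rules} in the asymptotic regime and verify that they reduce to the same limiting characterization as the seating rule of the two-parameter CRP.
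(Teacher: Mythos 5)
Your core step is the same as the paper's: the paper proves Lemma~\ref{twoparam_lemma} exactly by combining Lemma~\ref{occupancy}(2) with Pitman's Lemma 3.11, i.e.\ by checking that $g_{-\alpha}(j)=\alpha\,\Gamma(j-\alpha)/(\Gamma(1-\alpha)\Gamma(j+1))$ coincides with the limiting proportion of blocks of size $j$ under the $(\alpha,\theta)$ seating model and that the block counts grow at the same rate $n^{\alpha}$. Up to that point your write-up matches the paper's argument, and your computation of $g_{-\alpha}(j)$ is correct.

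The difficulty is in the extra lifting step you add to turn spectrum-matching into identification of the full law, and there your key claim fails. Both the spectrum $g_{-\alpha}(j)$ and the growth exponent $\alpha$ are free of $\theta$ (and of $\rho$): every $\mathrm{CRP}(\alpha,\theta')$ with $\theta'>-\alpha$, and more generally every Poisson--Kingman/Gibbs partition of genus $\alpha$, has the same block-size spectrum and the same polynomial exponent. So ``matching the spectrum and the growth exponent'' cannot single out $\mathrm{PD}(\alpha,\theta)$ with $\theta=\rho$; within the Gnedin--Pitman classification these statistics determine $\alpha$ only, while $\theta$ is encoded in a genuinely $\theta$-dependent object such as the distribution of the $\alpha$-diversity $\lim_n N_n/n^{\alpha}$ (a tilted Mittag--Leffler law for the two-parameter CRP) or the prediction rule/EPPF itself. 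Lemma~\ref{occupancy}(2) only gives $N_n=O(n^{-\beta})$ and says nothing about that limit law, so the parameter correspondence $\theta=\rho$ is not established by your route. A secondary issue: your appeal to the Gibbs classification presupposes that the unordered partition obtained by summing the ordered-partition law \eqref{splitrules} over block orders is itself of Gibbs product form; the paper verifies this only for $\beta=1$ (Theorem~\ref{crp_correspondence}), not for $\beta\in(-1,0)$. To close the gap you would need either to compute the induced EPPF (or predictive rule) for $\beta\in(-1,0)$ and compare it directly with the $(\alpha,\theta)$ seating rule, or to identify the law of $N_n/n^{\alpha}$ and show it agrees with the CRP$(\alpha,\rho)$ diversity.
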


An interesting class of priors is now generated by looking at the pilgrim process
($\beta = 0$).  
The number of blocks grows at $\log^2 (n)$ which does not correspond 
to any of the seating models discussed. 
Therefore, the pilgrim process provides a method for generating an
exchangeable partition processes with intermediary behavior between
the one-parameter Chinese restaurant process  ($\alpha = 0$) and the 
two-parameter process ($\alpha \in (0,1)$).
We can think of the pilgrim process as an embedding in continuous time
of a partition process.  Clusters that are similar will be in close temporal
proximity.
Weak continuity of conditional distributions suggests that
the difference between two clusters being arbitrarily close to one another
versus one larger cluster on the predictive distribution is negligible at continuity points.
In situations where such behavior is desired, the pilgrim process seems
a viable candidate for cluster analysis. 

\subsection{Feature allocation processes}

A \emph{feature allocation} is a generalization of a partition that relaxes
the constraint of mutual exclusivity of the blocks. In particular, a 
feature allocation of $[n]$ is a multiset $F = \{ F_1, \ldots, F_k \}$
consisting of non-empty subsets of $[n]$.  Like a partition the 
elements of $[n]$ are unordered within blocks, and blocks are 
unordered as well.  Let $\F_n$ denote the set of feature allocations
of $[n]$, and $\F$ the set of feature allocations of $\mathbb{N}$.
The \emph{Indian buffet process} is a sequential description of a particular 
feature allocation process. 

\begin{definition} \normalfont
Let $B[n]$ denote the restriction of a feature allocation process $B \sim p$ to $[n]$. 
Then the \emph{Indian buffet process} is defined sequentially.  
Imagine a restaurant with countably infinite dishes indexed by $\mathbb{N}$.  
Each customer chooses a finite number of dishes according to the following
:
\begin{enumerate}
\item The first customer chooses $K_1^{+} \sim \text{Poisson} (\gamma)$ dishes
\item The number of dishes for the $n+1$st customer is determined by the conditional
distribution $p_{n+1} ( B[n+1] | B[n] )$.  For a previously sampled dish~$k$, 
the customer samples the dish with probability
\[
\frac{N_{n,k} - \alpha}{\theta + n}
\]
where $N_{n,k}$ denotes the number of previous customers to have sampled the dish.
The customer then chooses 
\[
K_{n+1}^{+} \sim \text{Poisson} \left( \gamma \frac{ \Gamma ( \theta + 1)}{\Gamma( \theta + n+1)} \frac{ \Gamma (\theta + \alpha + n)}{\Gamma (\theta + \alpha)} \right)
\]
new dishes. If $K_{n+1}^{+} > 0$ the new dishes are labelled by 
order-of-appearance $K_{n} + 1, \ldots, K_{n+1}$.
Here $K_n$ represents the number of dishes sampled after $n$
customers, $K_n = K_{n-1} + K_n^{+}$ (with $K_0 = 0$).
\end{enumerate}
\end{definition}

\subsection{Pilgrim's voyage to random feature allocations}
\label{voyage_to_ibp}

Consider the doubly-indexed non-negative sequence $\{ T_{i,j} \}$ associated 
with the pilgrim's voyage.  Then similar to how the pilgrim process induces
an ordered partition, $A \in \OP$, the pilgim's voyage induces an ordered feature 
allocation, $A^\prime \in \OF$.  The mapping defined by equation~\eqref{seq_to_part}
extends naturally to this setting for feature allocation processes.

The feature allocation, $B^\prime[n]$, induced by the pilgrim's voyage has a 
sequential description.  For $n=1$, the number of new hotels, $K_1^{+}$,
is a Poisson process with rate parameter $1/\rho$.   Given a final destination
$T^\star$, $K_1^{+} \sim \text{Poisson} ( T^\star / \rho )$.

For the $n+1$st pilgrim, the chance he stays at a hotel is $d / (n+\rho)$
where $d$ denotes the number of previous occupants.  The number of 
new hotels $K_{n+1}^{+}$ is given by a Poisson process with rate $1/(n + \rho)$. 
Therefore $K_{n+1}^{+} \sim \text{Poisson} ( T^{\star} / (n+ \rho) )$.

Here we have assumed $\nu = 1$.  Otherwise, the sequential
description changes as the rate of the Poisson process is multiplied
by $\nu$.
Without loss of generality for the induced feature allocation process, 
we set $T^{\star} = 1$ and $\nu > 0$.
Theorem~\ref{ibp_to_pv} establishes the connection between the
Indian buffet process and the pilgrim's voyage.
 
\begin{theorem} 
\label{ibp_to_pv}
The random feature allocation $B^\prime[n] \in \F_n$ induced by the pilgrim's voyage is equal
in distribution to the random feature allocation $B \in \OF_n$ generated
by the Indian buffet process for $\theta = \rho$, $\gamma = \nu / \rho$, and
$\alpha = 0$.
\end{theorem}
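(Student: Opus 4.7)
The plan is to show that the sequential generative rule for the pilgrim's voyage agrees, one pilgrim at a time, with the Indian buffet process's predictive distribution under the stated parameter identification $\theta=\rho$, $\gamma=\nu/\rho$, $\alpha=0$. Since both the pilgrim's voyage and the IBP are fully determined by their sequential descriptions (the latter is the IBP's definition; the former is summarized immediately above the theorem), matching the base case and the one-step update forces the joint distributions of $B'[n]$ and $B[n]$ to agree for every $n$.

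First, I would handle the base case. The first pilgrim traverses $(0,T^\star)=(0,1)$ with constant toll rate $\nu/\rho$ per unit distance, drawing fresh $\exp(1)$ funds for each sub-voyage, so the new hotels form a Poisson process of rate $\nu/\rho$ per unit length, giving $K_1^+\sim\text{Poisson}(\nu/\rho)$, which matches the IBP's $\gamma=\nu/\rho$.

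For the inductive step I would verify the two ingredients of the IBP conditional rule. (i) For each existing hotel $t_k$ with $d_k$ prior occupants, the probability that the new pilgrim becomes a resident equals the probability that an $\exp(1)$ variable falls below the tax $\log((n+\rho)/(n-d_k+\rho))$, which simplifies to $d_k/(n+\rho)$; under $\alpha=0,\theta=\rho$ the IBP uses exactly $N_{n,k}/(\theta+n)=d_k/(\rho+n)$. (ii) Within any inter-hotel gap of length $L$, the successive exhaustions of $\exp(1)$ funds at toll rate $\nu/(\rho+n)$ form a Poisson process of rate $\nu/(\rho+n)$ per unit distance, so the count of new hotels in that gap is $\text{Poisson}(\nu L/(\rho+n))$. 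Summing over the gaps partitioning $(0,1)$ gives $K_{n+1}^+\sim\text{Poisson}(\nu/(\rho+n))$; a short algebraic reduction of $\gamma\,\Gamma(\theta+1)\Gamma(\theta+n)/(\Gamma(\theta+n+1)\Gamma(\theta))$ under the stated parameters yields the same $\nu/(\rho+n)$.

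The delicate step, and really the only one, is justifying that the stay/pass events at distinct existing hotels and the new-hotel Poisson counts in distinct gaps are \emph{jointly} independent, as required to match the IBP's independent-Bernoulli-plus-independent-Poisson description. I would handle this via the memorylessness of the exponential: after a paid tax of size $c$ the residual funds, being $\exp(1)$ conditioned on exceeding $c$, are again $\exp(1)$, and each fund-exhaustion triggers a fresh independent $\exp(1)$ draw for the next sub-voyage. Hence the behaviour beyond $t_k^+$ is probabilistically a fresh pilgrim-voyage on $(t_k,T^\star)$, so the decisions factor across gaps and hotels. Finally, since $B'[n]\in\F_n$ is an unordered multiset, the IBP's order-of-appearance labelling is immaterial, and agreement of the one-step transitions promotes to equality in distribution of the induced feature allocation processes.
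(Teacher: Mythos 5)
Your proposal is correct and follows the same route as the paper, which simply asserts that the theorem is a direct consequence of the equivalence of the two sequential descriptions (matching the Poisson rate $\nu/(\rho+n)$ for new hotels and the occupation probability $d_k/(\rho+n)$ against the IBP predictive rule with $\theta=\rho$, $\gamma=\nu/\rho$, $\alpha=0$). Your write-up merely makes explicit the details the paper leaves implicit, notably the memorylessness argument for joint independence across hotels and gaps, which is a welcome addition but not a different method.
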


The proof is a direct consequence of the equivalence between the sequential
descriptions of the random feature allocation processes.
If we ignore the ordering of the pilgrim's voyage for $\beta >0$ and $\rho = 1$ we have the
process that corresponds to the Indian buffet process.
The pilgrim's voyage can be generalized in the same way as the 
pilgrim process.  The generalization is a three-parameter family
with parameters $(\nu, \rho, \beta)$ such that $\nu, \rho >0$ and
$\beta > -1$.  By similar arguments via sequential descriptions 
one can prove the following lemma.

\begin{lemma}
Given the Indian buffet process with parameters $\gamma > 0$, $\alpha \in [0,1)$, 
and $\theta >0$, there exists $\nu > 0$, $\beta \in (-1,0]$, and $\rho > 0$
such that the random feature allocation induced by the generalized pilgrim's voyage
is equivalent in distribution to that obtained by the Indian buffet process.
\end{lemma}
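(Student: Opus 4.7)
The plan is to mirror the proof of Theorem~\ref{ibp_to_pv}, which handles the boundary case $(\beta,\alpha)=(0,0)$, and to promote it to a full three-parameter correspondence by matching the two sequential descriptions head-to-head. With $T^\star=1$ (which is without loss of generality, as noted in Section~\ref{voyage_to_ibp}), it suffices to show that, for a suitable map $(\gamma,\theta,\alpha)\mapsto(\nu,\rho,\beta)$, the conditional law of the $(n+1)$st arrival coincides under both processes: the probability of stopping at an existing hotel matches the IBP existing-dish probability, and the intensity of new hotels matches the IBP new-dish Poisson rate.

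The first step is to work out the generalized tax and toll rules in the voyage setting, obtained by the analogous generalization used to pass from the pilgrim process to the generalized pilgrim process. Because every prior pilgrim reaches $T^\star$, the risk set is constant $R(s)\equiv n$ on $(0,1]$; the generalized toll rate then contributes a uniform intensity, and integrating over $(0,1]$ gives a Poisson mean $\lambda_n=\nu\,B(\rho+n,\,1+\beta)$ for the number of new hotels. A direct calculation from the generalized tax shows that the stopping probability at a hotel with $d_k$ residents is the affine expression $p_{n,k}=(d_k+\beta)/(n+\rho+\beta)$.

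The second step is to match these against the IBP formulas $(N_{n,k}-\alpha)/(\theta+n)$ and $\gamma\,\Gamma(\theta+1)\,\Gamma(\theta+\alpha+n)/(\Gamma(\theta+\alpha)\Gamma(\theta+n+1))$. Setting
\[
\beta=-\alpha,\qquad \rho=\theta+\alpha,\qquad \nu=\gamma/B(\rho,\,1+\beta),
\]
turns $p_{n,k}$ into $(d_k-\alpha)/(\theta+n)$ immediately, and a single application of the identity $B(\theta+\alpha+n,1-\alpha)=\Gamma(\theta+\alpha+n)\Gamma(1-\alpha)/\Gamma(\theta+n+1)$ shows that $\lambda_n$ coincides with the IBP Poisson mean. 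Agreement of the two conditional laws at every step, combined with exchangeability, yields equality of the induced feature allocation distributions.

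The main obstacle is the first step: one must carefully verify that the natural lift of tax and toll from the generalized pilgrim process to the voyage setting does produce the affine stopping probability and the Beta-function intensity above, rather than more complicated expressions. Once these forms are established, the rest is one-step gamma-function algebra, and the image condition $\beta\in(-1,0]$ falls out automatically as the image of $\alpha\in[0,1)$ under $\beta=-\alpha$, which is also the range ensuring that $B(\rho,1+\beta)$ is finite and positive.
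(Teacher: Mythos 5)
Your proposal is correct and follows essentially the same route as the paper, which proves the lemma exactly by matching the sequential (predictive) descriptions of the generalized pilgrim's voyage and the three-parameter Indian buffet process, as in Theorem~\ref{ibp_to_pv}. In fact the paper gives no more detail than ``by similar arguments via sequential descriptions,'' so your explicit parameter map $\beta=-\alpha$, $\rho=\theta+\alpha$, $\nu=\gamma/B(\rho,1+\beta)$, together with the identity $B(\theta+\alpha+n,1-\alpha)=\Gamma(\theta+\alpha+n)\Gamma(1-\alpha)/\Gamma(\theta+n+1)$ and the check that the existing-hotel probability $(d_k+\beta)/(n+\rho+\beta)$ becomes $(d_k-\alpha)/(\theta+n)$, supplies precisely the verification the paper leaves implicit (and is the form forced by carrying over the generalized toll $B(\rho+R(s),\beta+1)$ consistently).
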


In particular, the three-parameter Indian buffet process is a proper subset
of the three-parameter family of feature allocation processes generated
by the generalized pilgrim's voyage.

\section{Summary}

In this paper, we presented the pilgrim process, a probabilistic process 
in which initial funds $X_1, X_2,\ldots$ for each pilgrim gives rise to a 
non-negative sequence $T_1, T_2,\ldots$ that is infinitely exchangeable.  
Simple expressions for a sequential description of the process, the joint density, 
and the multivariate survivor function are provided as well as a connection 
to the Kaplan-Meier estimator.  Simulation allows investigation of the 
process's properties; for example, we find that the number of hotels grows 
in tandem with the total expenditure on tolls, both growing at a rate 
proportional to $\log^2 (n)$.  We connect the pilgrim process with the class 
of survival processes investigated by Hjort (1990), seeing that the pilgrim process 
is closely approximated by the gamma process originally considered by 
Kalbfelisch (1978) and Clayton (1991).  Also, generalized pilgrim process 
is connected to the study of Markov branching models in neutral 
evolutionary theory.  We end with exploration of extensions to the 
pilgrim process and its connection to exchangeable partition 
and feature allocation processes.

\appendix 

\newpage

\section{Technical Arguments}

\subsection{Proof of Theorem~\ref{cty_pred_thm}}
\label{app:cty_pred}

Here is presented the conditions for the conditional survival function, $\text{pr} \left( T_{n+1} > t | R[n] \right)$, to be a continuous function of the observed risk trajectory.  
To each consistent splitting rule there is an associated characteristic index $(\zeta_{i} )_{i=1}^\infty$ defined 
by
\[
\zeta_{n+1} = \frac{\zeta_n}{1- q(n,1)} = \zeta_1 \prod_{j=1}^n (1-q(j,1))^{-1}
\]
The splitting rules can be described by $k$th order differences
\[
(\Delta^k \zeta)_n = \sum_{j=1}^k (-1)^{k-j} {k \choose j} \zeta_{n+j}
\]
Then
\[
q(r,d) = \frac{ (-1)^{d+1} (\Delta^d \zeta)_r }{ \zeta_{r+d} }
\]
For the conditional survival function to be right continuous, the atomic component of the conditional survival distribution must satisfy
\[
\frac{ (\Delta^{d-1} \zeta ) (r+2) } {(\Delta^{d-1} \zeta ) (r+1)} \cdot \frac{ (\Delta \zeta ) (r+1) } {(\Delta \zeta ) (r)} = \frac{ (\Delta^{d} \zeta ) (r+1) } {(\Delta^{d} \zeta ) (r)}
\]
for $r \geq 0$ and $d >1$.  On the other hand, for the function to be left continuous, the atomic component must satisfy
\[
 \frac{ (\Delta \zeta ) (r+d) } {(\Delta \zeta ) (r+d-1)} \cdot \frac{ (\Delta^{d-1} \zeta ) (r+1) } {(\Delta^{d-1} \zeta ) (r)} = \frac{ (\Delta^{d} \zeta ) (r+1) } {(\Delta^{d} \zeta ) (r)}
\]
Recursive substitution shows the two conditions to be equal and continuity holds if
\begin{equation} \label{cty_cond}
 \frac{ (\Delta \zeta ) (r+d) } {(\Delta \zeta ) (r)} = \frac{ (\Delta^{d} \zeta ) (r+1) } {(\Delta^{d} \zeta ) (r)}
\end{equation}
It is now shown that such a condition is uniquely satisfied by the harmonic process.

\begin{proposition}
The harmonic process is the only Markov survival process with continuous conditional survival function.  That is, 
\[
\lim_{T_i \to t^\prime} \text{pr} \left( T_{n+1} > t | R[n] \right) = \text{pr} \left( T_{n+1} > t | R^\prime [n] \right)
\]
where $R^\prime$ is the risk set trajectory when $T_i = t^\prime$ for some $i \in [n]$ {\bf if and only if} $T_{n+1}$ is from a harmonic survival process.
\end{proposition}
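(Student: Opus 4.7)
The plan is to separate the two directions. For the forward direction, a direct computation shows that when $\zeta(n) = \psi(n+\rho) - \psi(\rho)$, standard forward-difference identities for the digamma function give
\[
(\Delta^{d} \zeta)(r) = (-1)^{d-1}\,(d-1)!\,\frac{\Gamma(r+\rho)}{\Gamma(r+d+\rho)},
\]
so both sides of the continuity condition \eqref{cty_cond} collapse to $(r+\rho)/(r+d+\rho)$. This also takes care of verifying, after the converse, that the candidate family actually satisfies \eqref{cty_cond} for every $d$.

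For the converse, the key idea is that the $d=2$ case of \eqref{cty_cond} already forces $\zeta$ into the harmonic form; the higher-$d$ conditions then add nothing new. Write $g(n) := (\Delta \zeta)(n)$. Condition \eqref{cty_cond} at $d=2$ reads
\[
\frac{g(r+2)}{g(r)} \;=\; \frac{g(r+2)-g(r+1)}{g(r+1)-g(r)}.
\]
Cross-multiplying and rearranging yields $g(r+1)\bigl(g(r)+g(r+2)\bigr) = 2 g(r) g(r+2)$, which, upon dividing by $g(r)g(r+1)g(r+2)$, is equivalent to
\[
\frac{1}{g(r)} + \frac{1}{g(r+2)} \;=\; \frac{2}{g(r+1)}.
\]
Hence $1/g$ is an arithmetic progression, so $g(n) = 1/(\alpha n + \beta)$ for some constants $\alpha,\beta$.

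Setting $\rho := \beta/\alpha$ and telescoping, we obtain
\[
\zeta(n) \;=\; \zeta(0) + \frac{1}{\alpha}\bigl(\psi(n+\rho) - \psi(\rho)\bigr),
\]
and since $\zeta(0)=0$ for a valid characteristic index (and the overall multiplicative scale $1/\alpha$ is absorbed into the rate parameter $\nu$), this is exactly the harmonic form. Positivity of the induced splitting rule $q(r,d) \in [0,1]$ for all $r \geq 0$ and $d \geq 1$ forces $\alpha > 0$ and $\rho > 0$.

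The main obstacle I anticipate is making sure that no information is lost by reducing to the $d=2$ case: a priori, \eqref{cty_cond} at each $d \geq 3$ could impose additional constraints on $\zeta$ beyond the one-parameter harmonic family. The forward computation resolves this, since substituting the harmonic $\zeta$ satisfies \eqref{cty_cond} for every $d \geq 2$ simultaneously; the conditions at $d \geq 3$ are therefore automatic consequences of the $d=2$ conclusion, and the characterization is complete.
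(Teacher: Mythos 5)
Your argument is correct in its main line, but it takes a genuinely different, and in fact more direct, route than the paper. The paper never solves the continuity condition in closed form: it shows that \eqref{cty_cond}, combined with the normalization $\sum_{d=1}^n {n \choose d}\, q(n-d,d)=1$, makes each $\zeta_{n+1}$ a deterministic (linear) solution in terms of $\zeta_1,\ldots,\zeta_n$, so the whole characteristic index is pinned down by $\zeta_1,\zeta_2$; it then identifies the one remaining degree of freedom $\zeta_2=\zeta_1(1+c)$, $c\in[0,1]$, with the parameter $\rho$ of the harmonic family, and disposes of the degenerate case (vanishing coefficient of $\zeta_{n+1}$, equivalently trivial forward differences) in a separate proposition showing $\zeta_n\propto n$. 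You instead extract everything from the $d=2$ case: rewriting it as $1/g(r)+1/g(r+2)=2/g(r+1)$ with $g=\Delta\zeta$ shows that $1/g$ is an arithmetic progression, hence $\zeta(n)=\zeta(0)+\alpha^{-1}\bigl(\psi(n+\rho)-\psi(\rho)\bigr)$, and the explicit identity $(\Delta^{d}\zeta)(r)=(-1)^{d-1}(d-1)!\,\Gamma(r+\rho)/\Gamma(r+d+\rho)$ verifies all the conditions with $d\ge 3$ at once, so they impose nothing further. This buys a shorter and more transparent argument; the paper's recursion works instead at the level of the consistency constraints and makes the role of the normalization explicit.

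One point needs repair: the claim that positivity of the splitting rule forces $\alpha>0$. Positivity of $g(n)$ for all $n$ only forces $\alpha\ge 0$ and $\beta>0$; the boundary case $\alpha=0$ gives $\zeta(n)\propto n$, i.e.\ the iid exponential model, which is a legitimate Markov survival process whose predictive distributions are trivially weakly continuous (no ties, hence no atoms), and which Theorem~\ref{cty_pred_thm} explicitly includes as a limit point. Moreover, when $\alpha=0$ the second differences vanish, so the ratio form of \eqref{cty_cond} is $0/0$ there and your cross-multiplied identity is the only meaningful formulation; you cannot silently divide by $(\Delta^{2}\zeta)(r)$ in the reduction. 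The conclusion should therefore be stated as a dichotomy: either $\alpha>0$, giving the harmonic process with $\rho=\beta/\alpha>0$, or $\alpha=0$, giving the iid exponential limit --- which is precisely the case the paper isolates in its companion proposition on trivial forward differences. With that adjustment your characterization matches the paper's.
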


\begin{proof}
Recall the definition of the $k$th order forward differences
\[
\lambda(r,d) = (\Delta^d \zeta) (r) = \sum_{j=0}^{d} (-1)^{d-j} {d \choose j} \zeta_{r+j}
\]
which implies that this forward difference is a linear function of the set $\{ \zeta_r, \ldots, \zeta_{r+d} \}$. 

Start by considering the standardized characteristic index ($\zeta_1 = 1$).  First, let $\zeta_2 > 0$ be fixed.  

Let $n = r+d > 2$ and $d \geq 1$. Then equation~\eqref{cty_cond} becomes
\begin{align*}
 \frac{ (\Delta \zeta ) (n) } {(\Delta \zeta ) (r)} &= \frac{ (\Delta^{d} \zeta ) (r+1) } {(\Delta^{d} \zeta ) (r)} \\
  \Rightarrow (\Delta \zeta ) (n) \cdot (\Delta^{d} \zeta ) (n-d) &=  (\Delta^{d} \zeta ) (n-d+1) \cdot (\Delta \zeta ) (n-d)
\end{align*}
By definition, $(\Delta \zeta ) (n)$ is a linear function of $\{ \zeta_{n}, \zeta_{n+1} \}$ while $(\Delta^d \zeta ) (r)$ is a function of $\{ \zeta_r, \ldots, \zeta_{n} \}$.  Therefore, the left hand side is a linear function of $\zeta_{n+1}$ given $\{ \zeta_i \}_{i \leq n}$.  On the right hand side, $(\Delta^d \zeta) (n-d+1)$, is a function of $\{ \zeta_{n-d+1}, \ldots, \zeta_{n+1} \}$ while $(\Delta \zeta ) (n-d)$ is a function of $\{ \zeta_{n-d} , \zeta_{n-d+1} \}$.  Since $d \geq 1$, both sides are linear in $\zeta_{n+1}$.  Therefore solving for $\zeta_{n+1}$ shows the characteristic index is a deterministic function of the previous characteristic index values $\{ \zeta_{1}, \ldots, \zeta_n \}$.

The above argument holds if the coefficient of $\zeta_{n+1}$ is nonzero.  The coefficient is equivalently zero if $\lambda(n-d,d) = \lambda(n-d,1)$.  Now, suppose equality holds for all $d \in [n]$.  By definition, the splitting rule satisfies
\begin{align*}
1 &= \sum_{d=1}^n {n \choose d} q(n-d,d) \\
\zeta_n  &= \sum_{d=1}^n {n \choose d} \lambda (n-d,d)  \text{ \hspace{0.2cm} by definition }\\
\zeta_n  &= \sum_{d=1}^n {n \choose d} \lambda (n-d,1) \text{ \hspace{0.2cm} by assumption }\\
\zeta_n  &= \sum_{d=1}^n {n \choose d} \left( \zeta_{n-d+1} - \zeta_{n-d} \right) \text{ \hspace{0.2cm} by definition } \\
\zeta_n &= \frac{1}{n-1} \left[ n\cdot \zeta_{n-1} - \sum_{d=2}^n {n \choose d} \left( \zeta_{n-d+1} - \zeta_{n-d} \right) \right]
\end{align*}
which implies that $\zeta_n$ is again a function of the previous characteristic indices.

The above argument shows $\zeta_k$ is a deterministic function of $\{ \zeta_1, \zeta_2 \}$ for $k \geq 3$.  However, $\zeta_2$ is constrained by choice of $\zeta_1$.  In particular, the holding times satisfy $\zeta_{n+1} = \zeta_n / (1- q(n,1) )$.  Moreover, the splitting rule must satisfy $0 \leq q(n,1) \leq 1/(n+1)$.  Therefore
\[
\zeta_n \leq \zeta_{n+1} \leq \left( 1+\frac{1}{n} \right) \zeta_{n}
\]
This translates to $\zeta_2 = \zeta_1 (1 + c)$ for some $c \in [0,1]$.  It rests to show a correspondence between $c$ and the parameter $\rho$ controlling the harmonic process.

For the harmonic process,
\[
\nu \Gamma(1) / \rho = \zeta_1
\]
so that $\zeta_1 \rho = \nu$.  Then due to the normalization of the splitting rule
\begin{align*}
\zeta_2 &= {2 \choose 1} \frac{\nu \Gamma (1)}{(1+\rho)} + {2 \choose 2} \frac{\nu \Gamma (2)}{\rho \cdot (1+\rho)} \\
&= \zeta_1 \left[ 1+ \frac{\rho}{1+\rho} \right]
\end{align*}
This establishes the correspondence between $c$ and $\rho$ and therefore between all $\{ \zeta_1, \zeta_2 \}$ that define continuous survival functions and the set of all harmonic processes with parameters $\rho$ and $\nu$.
\end{proof}

\begin{proposition}
If $\lambda(m-d,d) = 0$ for fixed $m \geq 1$ and for all $2 \leq d \leq m$, then $\zeta_n \propto n$ for all $n$.  Thus the only process with trivial forward differences is when $T_i$ are iid exponential.
\end{proposition}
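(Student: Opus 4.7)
The plan is to read the hypothesis as the vanishing of every finite difference of $(\zeta_n)$ of order at least two, deduce that $\zeta_n$ is affine in $n$, and then recognize this case as the iid exponential law.

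First I would unpack the indexing. As $m$ ranges over $\{2, 3, \ldots\}$ and $d$ over $\{2, \ldots, m\}$, the pair $(r, d) = (m-d, d)$ sweeps out every $r \geq 0$ together with every order $d \geq 2$. Hence the hypothesis $\lambda(m-d, d) = 0$ is equivalent to $(\Delta^d \zeta)(r) = 0$ for every $r \geq 0$ and every $d \geq 2$. Specializing to $d = 2$ gives
\[
\zeta_{r+2} - 2\zeta_{r+1} + \zeta_r = 0
\]
for all $r \geq 0$, so the first differences $\zeta_{r+1} - \zeta_r$ are constant and $\zeta_n = \zeta_0 + n(\zeta_1 - \zeta_0)$. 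Using the standing convention $\zeta_0 = 0$ yields $\zeta_n = n\zeta_1$, i.e.~$\zeta_n \propto n$. The remaining conditions $(\Delta^d \zeta)(r) = 0$ for $d \geq 3$ are then automatically satisfied by any arithmetic sequence.

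To finish the proposition I would identify this as the iid exponential model via the formula $q(r, d) = (-1)^{d+1} (\Delta^d \zeta)(r) / \zeta_{r+d}$ established earlier in the appendix. The hypothesis forces $q(r, d) = 0$ for every $d \geq 2$, so each failure time is a singleton with probability one. The holding time before the next failure, given $n$ at risk, is exponential with rate $\zeta_n = c n$ where $c = \zeta_1 > 0$; the failing individual is uniform over the $n$ at risk. These are precisely the joint dynamics of $n$ iid exponential variables with rate $c$, so $T_1, T_2, \ldots$ are iid $\mathrm{Exp}(c)$.

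The only point that requires any care is the combinatorial bookkeeping that reparametrizes $(m, d)$ with $2 \leq d \leq m$ as $(r, d) = (m-d, d)$ with $r \geq 0$ and $d \geq 2$, so that one is certain every pair $(r, d)$ with $d \geq 2$ is covered (in particular $d = 2$ at every $r$). Once that is in hand, the rest is the classical characterization of arithmetic progressions by vanishing second differences, together with the formula for $q(r, d)$; no deeper obstacle arises.
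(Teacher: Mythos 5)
There is a genuine gap, and it comes right at the start: you have misread the quantifier in the hypothesis. The proposition assumes $\lambda(m-d,d)=(\Delta^d\zeta)(m-d)=0$ for a \emph{fixed} $m$ and all $2\le d\le m$, i.e.\ the forward differences vanish only along the single anti-diagonal $r+d=m$. Your argument instead lets $m$ range over all integers, so that you obtain $(\Delta^2\zeta)(r)=0$ at \emph{every} $r\ge 0$, and from there linearity of $\zeta$ is immediate. Under the actual hypothesis you are given nothing directly about second differences at positions off the level $r+d=m$, so the step ``specializing to $d=2$ gives $\zeta_{r+2}-2\zeta_{r+1}+\zeta_r=0$ for all $r$'' is not available; you have it only at $r=m-2$. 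Consequently your proof establishes a strictly weaker statement than the proposition.

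The missing ingredient is that the characteristic index belongs to a consistent exchangeable splitting rule, and it is this structure that lets the single-level hypothesis propagate. The paper's argument has two propagation steps your proposal does not touch: downward, the consistency relation $(1-q(m-1,1))\,q(m-1-d,d)=q(m-1-d,d+1)+q(m-d,d)$ shows that vanishing of $q(m-d,d)$ for $d\ge2$ forces $q(m-1-d,d)=0$ for $d\ge 2$ and $q(m-2,1)=1/(m-1)$, and recursively $\zeta_k=k$ for all $k\le m$ (after standardizing $\zeta_1=1$); upward, the normalization identity $\sum_{d=1}^{n}\binom{n}{d}(\Delta^d\zeta)(n-d)=\zeta_n$, combined with the computation of $(\Delta^d\zeta)(n-d)$ when $\zeta_k=k$ for $k<n$ (the paper's Lemma on alternating sums), yields $\zeta_n=2^n\zeta_n-n(2^n-1)$ and hence $\zeta_n=n$ for every $n>m$. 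Your closing identification of $\zeta_n\propto n$ with the iid exponential model (all $q(r,d)=0$ for $d\ge2$, exponential holding times with rate proportional to the number at risk) is fine, but to prove the proposition as stated you must supply the consistency-based downward recursion and the normalization-based upward induction rather than assume the hypothesis at every level.
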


\begin{proof}
Consider the standardized sequence, $\zeta_1 = 1$, and start by showing that $\lambda(m-d,d) = 0$ implies that $\zeta_k = k$ for $k \leq m$.  Indeed, the condition implies that $\zeta_m = m$ and $q(m-1,1) = 1/m$.  Consistency implies
\begin{align*}
(1-q(m-1,1)) \cdot q(m-1-d,d) &= q(m-1-d,d+1) + q(m-1-d+1,d) \\
&= q(m-(d+1), (d+1) ) + q(m - d, d) \\
&= q(m-d,d)  \\
&= \bigg \{ \begin{array}{c c} 0 & \text{if } d \geq 2 \\ 1/m & \text{if } d = 1 \end{array}
\end{align*}
This implies $q(m-1-d,d) = 0$ for $d \geq 2$ which implies $q((m-1) -1,1) = 1/(m-1)$ which is equivalent to $\zeta_{m-1} = m-1$.  Recursively applying this argument yields $\zeta_k = k$ for all $k \leq m$.

It rests to show that $\zeta_k = k$ for $k < n$ implies $\zeta_{n} = n$.  The normalization condition of the splitting rule implies
\[
\sum_{d=1}^n {n \choose d} (\Delta^d \zeta) (n-d) = \zeta_n
\]
Now study the $d$th forward differences:
\begin{align*}
(\Delta^d \zeta) (n-d) &= \sum_{k=0}^d (-1)^{d-k} {d \choose k} \zeta_{n-d+k} \\
&= \zeta_n + \sum_{k=0}^{d-1} (-1)^{d-k} {d \choose k} (n-d+k) \\
&= \zeta_n -n + {\bf 1} [d = 1] \hspace{0.2cm} \text{ by Lemma 0.3}
\end{align*}
Plugging into the normalization condition yields
\begin{align*}
\zeta_n &= \sum_{d=1}^n {n \choose d} \left[ \zeta_n -n + {\bf 1} [d = 1] \right]  \\
&= 2^n \zeta_n - n \cdot (2^n -1) \\
\Rightarrow \zeta_n &= n
\end{align*}
which completes the proof.
\end{proof}

\begin{lemma} \label{lemma1}
\[
\sum_{k=0}^{d-1} (-1)^{d-k} {d \choose k} (n-d+k) = -n + {\bf 1} [d = 1]
\]
\end{lemma}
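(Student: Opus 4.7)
The plan is to recognize the truncated alternating sum as, up to a single missing term, a $d$th-order forward difference of the identity function, and then invoke the standard fact that $\Delta^{d}$ annihilates polynomials of degree strictly less than $d$. Concretely, I would first complete the truncated sum by adding and subtracting the $k=d$ term, writing
\[
\sum_{k=0}^{d-1}(-1)^{d-k}\binom{d}{k}(n-d+k)
 \;=\; \sum_{k=0}^{d}(-1)^{d-k}\binom{d}{k}(n-d+k) \;-\; n,
\]
since the missing summand $(-1)^{0}\binom{d}{d}(n-d+d)$ is exactly $n$. With the forward-difference notation used earlier in the paper, the first sum on the right is $(\Delta^{d} f)(n-d)$ for the linear function $f(x)=x$.

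The second step is to evaluate this forward difference. For $d\ge 2$ the identity function has degree $1 < d$, so $(\Delta^{d} f)(n-d) = 0$ by the standard annihilation property of finite differences, giving $-n$ for the full expression. For $d=1$ one computes directly $(\Delta f)(n-1) = f(n)-f(n-1) = 1$, which yields $1 - n$. These two cases are precisely $-n + \mathbf{1}[d=1]$, as claimed. There is no real obstacle here; the only care required is to treat the boundary $d=1$ separately so the indicator is recovered, and (if one prefers a self-contained derivation) to verify the annihilation property by splitting the sum into $(n-d)\sum_{k}(-1)^{d-k}\binom{d}{k}$ plus $\sum_{k} k(-1)^{d-k}\binom{d}{k}$ and applying the identities $\sum_{k=0}^{d}(-1)^{d-k}\binom{d}{k}=0$ and $k\binom{d}{k}=d\binom{d-1}{k-1}$ for $d\ge 2$.
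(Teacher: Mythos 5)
Your proof is correct and uses essentially the same ingredients as the paper's: the paper splits the truncated sum directly into its $(n-d)$ and $k$ parts and evaluates each with the identities $\sum_{k=0}^{d}(-1)^{d-k}\binom{d}{k}=0$ and $\sum_{k=0}^{d}(-1)^{d-k}\binom{d}{k}k=\mathbf{1}[d=1]$, whereas you first restore the missing $k=d$ term (worth exactly $n$) and then kill the completed sum as $(\Delta^{d}x)(n-d)$ for $d\ge 2$, treating $d=1$ separately. The two arguments are equivalent bookkeeping, and your handling of the $d=1$ boundary matches the indicator correctly.
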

\begin{proof}
First note $\sum_{k=0}^{d} (-1)^{d-k} {d \choose k} = 0$ and therefore 
\[
\sum_{k=0}^{d-1} (-1)^{d-k} {d \choose k} (n-d) = -(n-d)
\]
It was previously shown that $\sum_{k=0}^{d} (-1)^{d-k} {d \choose k} k = {\bf 1} [d = 1]$ which implies
\[
\sum_{k=0}^{d-1} (-1)^{d-k} {d \choose k} k  = \bigg \{ \begin{array}{c c} 0 & \text{if } d= 1 \\ -d & \text{if } d \neq 1 \end{array}
\]
Adding these together proves the lemma.
\end{proof}

\subsection{Proof of Theorem~\ref{branching_wkcty}}

Theorem 2 from McCullagh, Pitman, and Winkel (2008) 
proves that the binary fragmentation trees of Gibbs type
are identified with the Aldous' beta-splitting model.

Among the beta-splitting models, the key continuity condition
is exactly the same as stated in section~\ref{app:cty_pred}
\[
\frac{ (\Delta \lambda ) ( i + j)}{ (\Delta \lambda) (j)} = 
\frac{ (\Delta^i \lambda ) ( j + 1)}{ (\Delta^i \lambda) (j)} 
\]
which is only satisfied by the beta-splitting model with
$\beta = 0$ and as $\beta \to \infty$.


\subsection{Proof of Lemma~\ref{pilg_pois_lemma}}
\label{app:pilg_poisson}

The splitting rule for the harmonic process is 
\[
q(r,d) = \frac{1}{\psi(r+d+\rho) - \psi(\rho)} \cdot \left( 1 - \frac{d}{n} \right)^{\rho - 1} \frac{1}{d}
\]
When $n = r+d$ is large, $\psi(n+\rho) - \psi(\rho) \approx \log(n)$.  Moreover, the number of 
blocks grows at a rate of $\log^2 (n)$.  Therefore, by a Poissonization argument the 
number of blocks of size $d$ is approximately Poisson with rate parameter
\[
\lambda \approx c \cdot \frac{\log^2(n)}{\log (n)} \frac{1}{d} = c \cdot \log (n) \frac{1}{d} 
\]
where $c = \frac{1}{2 \Psi_1 (\rho) }$ is a constant dependent on $\rho$, and $\Psi_1$ is the 
trigamma function. 

\subsection{Proof of Lemma~\ref{occupancy}}
\label{app:occupancy}

The splitting rule for $\beta > 0$ is given by
\begin{align*}
q(r,d) &= \frac{1}{B(\rho, \beta) ( 1 - \rho^{\uparrow n} / (\rho + \beta)^{\uparrow n} ) } \cdot \left( 1 - \frac{d}{n} \right)^{\rho - 1} \left( \frac{d}{n} \right)^{\beta - 1} \frac{1}{n} \\
&\approx d^{-1} \frac{1}{B(\rho, \beta) } \cdot \left( 1 - \frac{d}{n} \right)^{\rho - 1} \left( \frac{d}{n} \right)^{\beta} \\
\end{align*}
Let $n_i = r_i + d_i$ and $d > 0$ a fixed constant.  Then define
\[
a_i = \frac{1}{B(\rho, \beta) } \cdot \left( 1 - \frac{d}{n_i} \right)^{\rho - 1} \left( \frac{d}{n_i} \right)^{\beta}
\]
Then
\[
\frac{1}{d} \sum a_i
\]
denotes the expected number of blocks of size $d$ 
where the random sequence $(n_1, n_2, \ldots)$ satisfies $\sum n_i = n$.
As $n \to \infty$, we want to show $\sum_i a_i < \infty$.  
By the ratio test
\begin{align*}
\left( a_{i} \right)^{1/i} &\to\left( \frac{1}{ n_{i} } \right)^{\beta/i} \\
&\approx n_{i}^{-\beta / (c \cdot \log(i)) } \\
&\to e^{-\beta / c } < 1 \text{ for } \beta, c > 0
\end{align*}
where we have used $n_i \sim c \log(i)$ for $c > 0$.
Therefore, the sequence converges almost surely and we have
the expected number of blocks of size $d$ is proportional to $d^{-1}$.

Since the expected number of blocks is $1/ (\psi(\beta + \rho) - \psi (\rho)) \log (n)$
we see that the expected number of blocks of size $d$ is 
\[
\frac{1}{\psi(\beta + \rho) - \psi (\rho)} \, d^{-1}
\]
In particular, for $\beta = 1$ then we have the expected 
number of blocks is $\rho / d$ which is equivalent to the one
parameter Chinese restaurant process.


\begin{thebibliography}{15}
\def\journal#1{{\it #1\/}}
\def\vol#1{{\bf#1}}
\def\title#1{{\it #1}}
\def\pages{}

\bibitem{}
Aldous, D. (1996)
\title{Probability distributions on cladograms.}
In \journal{Random Discrete Structures} \vol{76}, \pages 1--18.

\bibitem{}
Blum, M. and Francois, O. (2006)
Which Random Processes Describe the Tree of Life? A Large-Scale Study of Phylogenetic Tree Imbalance.
\journal{Systematic Biology} \vol{55}, \pages 685--691.

\bibitem{}
Broderick, T., Jordan, M., and Pitman, J. (2013)
Cluster and feature modeling from combinatorial stochastic processes.
\journal{Statistical Science} \vol{28}, \pages 289--312.

\bibitem{}
Broderick, T., Jordan, M., and Pitman, J. (2013)
Feature allocations, probability functions, and paintboxes.
\journal{Bayesian Analysis} \vol{8}, \pages 801--836.

\bibitem{}
Clayton, D.G. (1991)
A Monte Carlo method for Bayesian inference in frailty models.
\journal{Biometrics} \vol{47}, \pages467--485.

\bibitem{}
Crane, H. (2015)
The ubiquitous Ewens sampling formula.
\journal{Statistical Science}, to appear.

\bibitem{}
Dempsey, W. and McCullagh, P. (2015)
Weak continuity of predictive distribution for Markov survival processes.
Unpublished.

%
\bibitem{}
Ferguson, T.S. (1973)
A Bayesian analysis of some nonparametric problems.
\journal{Annals of Statistics} \vol1, \pages209--230.

\bibitem{}
Griffiths, T. and Ghahramani, Z. (2005)
Infinite latent feature models and the Indian buffet process.
\journal{Advances in Neural Information Processing Systems 18}, Cambridge, MA, 2006. MIT Press.

\bibitem{}
Hjort, N.L. (1990)
Nonparametric Bayes estimators based on beta processes in models for life history data.
\journal{Annals of Statistics} \vol{18}, \pages1259--1294.

\bibitem{}
James, L.F. (2007)
Neutral-to-the-right species sampling mixture models.
Chapter~21 in \title{Advances in Statistical Modeling and Inference.
Essays in honor of K.~Doksum}, \pages425-439,
World Scientific Series in Biostatistics, vol~3.

\bibitem{}
Kalbfleisch, J.D. (1978)
Nonparametric Bayesian analysis of survival time data.
\journal{J.~Roy.\ Statist.\ Soc.~B} \vol{40}, \pages214--221.

\bibitem{} 
Kaplan, E. L. and Meier, P. (1958)
Nonparametric estimation from incomplete observations.
\journal{J.~Amer.\ Statist.\ Assn.} \vol{53}, \pages 457–-481.

\bibitem{}
Kingman, J.F.C (1993)
\title{Poisson Processes.}
Oxford Scientific Publications.

\bibitem{}
McCullagh, P., Pitman, J., and Winkel, M. (2008)
\title{Gibbs fragmentation trees.}
\journal{Bernoulli} \vol{14}, \pages 988--1002.

\bibitem{Pitman_2006}
Pitman, J. (2006)
{\it Combinatorial Stochastic Processes.}
Lecture Notes Math. Springer, New York.

\end{thebibliography}
\end{document}